\newtheorem{theorem}{Theorem}
\newtheorem{lemma}[theorem]{Lemma}
\newtheorem{cor}[theorem]{Corollary}
\newtheorem{example}[theorem]{Example}
\DeclareMathOperator{\traj}{\operatorname{traj}}
\newcommand{\deltaSNR}{\mathrm{\Delta SNR}}
\newcommand{\dB}{\mathrm{\,dB}}
\newcommand{\Pos}{\mathrm{Pos}}
\title{
Variational Regularization of Inverse Problems for Manifold-Valued Data 
}
\author{Martin Storath%
\thanks{Image Analysis and Learning Group, Interdisciplinary Center for Scientific Computing, Universit\"at Heidelberg, Germany.},
Andreas Weinmann\thanks{Department of Mathematics and Natural Sciences, Hochschule Darmstadt, and Institute of Computational Biology, Helmholtz Zentrum München, Germany. 
}} 
\date{\today}
\newcommand{\dist}{\mathrm{dist}}
\newcommand {\mean}{\mathrm{mean}}
\newcommand {\TV}{\mathrm{TV}}
\newcommand {\TGV}{\mathrm{TGV}}
\newcommand {\diam}{\mathrm{diam}}
\newcommand {\midp}{\mathrm{mid}}
\newcommand {\pt}{\mathrm{pt}}
\newcommand {\prox}{\mathrm{prox}}
\newlength\figureheight
\newlength\figurewidth
\begin{document}
\maketitle

\begin{abstract}
	In this paper, we consider the variational regularization of manifold-valued data in the inverse problems setting.
	In particular, we consider TV and TGV regularization for manifold-valued data with indirect measurement operators. 
	We provide results on the well-posedness and present algorithms for a numerical realization of these models in the manifold setup. Further, we provide experimental results for synthetic and real data to show the potential of the proposed schemes for applications. 
\end{abstract}
\  \\
\indent \textbf{Mathematical subject classification:}
94A08,	
68U10,   
90C90   
53B99,  
65K10  
\ \\[.1em]
\indent \textbf{Keywords}: Manifold-valued Data, Inverse Problems, Indirect Measurement Operator, TV Regularization, Higher Order Regularization.

\section{Introduction}

In various problems of applied sciences the data do not take values in a linear space but in a nonlinear space such as a smooth manifold. Examples of manifold-valued data are circle and sphere-valued data as appearing in interferometric SAR imaging \cite{massonnet1998radar}
and in color image processing \cite{chan2001total,vese2002numerical,kimmel2002orientation, lai2014splitting},
or as wind directions \cite{schultz1990circular} and orientations of flow fields \cite{adato2011polar, storath2017fastMedian}.
Other examples are data taking values in the special orthogonal group $\mathrm{SO}(3)$ expressing vehicle headings, aircraft orientations or camera positions \cite{rahman2005multiscale},
Euclidean motion group-valued data \cite{rosman2012group} as well as shape-space data \cite{Michor07,berkels2013discrete}.
Another prominent manifold is the space of positive (definite) matrices
endowed with the Fisher-Rao metric  \cite{radhakrishna1945information}. 
It is a Cartan-Hadamard manifold which has nice differential-geometric properties.
Moreover, it is the data space in diffusion tensor imaging (DTI)  \cite{pennec2006riemannian},
a medical imaging modality which allows the quantification of diffusional characteristics of a specimen non-invasively \cite{basser1994mr,johansen2009diffusion}, and thus is helpful in connection with
neurodegenerative pathologies such as schizophrenia  and autism 
\cite{foong2000neuropathological, alexander2007diffusion}. 
Related work including various means of processing manifold-valued data is discussed in Section~\ref{sec:RelatedWirk} below.

In this paper, we consider regularizing the manifold valued analogue of the discrete inverse problem
$ Au = f,$
where $A$ is a matrix with unit row sums (but potentially negative items of $A$), $f$ is given data and $u$ is the objective to reconstruct. 
More precisely, we consider the manifold analogue of Tichanov-Philips regularization 
which, for discrete data $f=(f_i)_{i=1}^N$,
reads
\begin{align}\label{eq:TichPhEuclidean}
\argmin_{u \in \mathbb{R}^K}  \left\|Au-f \right\|^p_p    +  \lambda \  R(u)
\ = \ 
  \argmin_{u \in \mathbb{R}^K} \  \sum\nolimits_{i=1}^N \left|\sum\nolimits_j A_{i,j}u_j-f_i \    \right|^p    +  \lambda \  R(u)
\end{align}
where $R$ is a regularizing term typically incorporating some prior assumption on the data.
To this end, we replace the euclidean distance $|\cdot -f|$ by the Riemannian distance $\dist(\cdot,f)$ and 
the weighted mean $\sum_j A_{i,j}u_j$ by the weighted Riemannian center of mass \cite{karcher1977riemannian, kendall1990probability}
denoted by $\mean(A_{i,\cdot},u)$
which is given by 
\begin{align}\label{eq:IntrMeanIntro}
	\mean(A_{i,\cdot},u) = \argmin_{v \in \mathcal{M}} \ \sum\nolimits_j  A_{i,j} \ \dist(v,u_j)^2.
\end{align}
We propose to consider the following natural manifold analogue of the variational problem \eqref{eq:TichPhEuclidean} which reads
\begin{align}
\argmin_{u \in \mathcal{M}^K} \sum\nolimits_{i=1}^N\dist(\mean(A_{i,\cdot},u),f_i)^p + \lambda \  R(u).
\end{align}
Here $R(u)$ is a regularizing term, for instance 
\begin{align}
 R(u) = \TV(u), \qquad \text{or} \qquad  R(u) = \TGV(u),
\end{align}
where $\TV(u)$ denotes the total variation 
and $\TGV(u)$ denotes the total generalized variation of 
discrete manifold valued target $u.$
We note that our setup in particular includes the manifold analogue of positive convolution operators, e.g., modeling blur, in one, two or several dimensions of domain.

\subsection{Related Work}\label{sec:RelatedWirk}

In recent years, there has been a lot of work on processing manifold valued data in the applied sciences.
For instance, wavelet-type multiscale transforms for manifold-valued data are considered in
\cite{rahman2005multiscale,grohs2009interpolatory, weinmannConstrApprox, wallner2011convergence, weinmann2012interpolatory}.
Manifold-valued partial differential equations have for instance been considered in   
\cite{tschumperle2001diffusion, chefd2004regularizing}.
In particular, finite element methods for manifold-valued data are the topic of \cite{grohs2013optimal,sander2015geodesic}.
Work on statistics on Riemannian manifolds can be found in 
\cite{oller1995intrinsic,bhattacharya2003large,fletcher2004principal,
	bhattacharya2005large,pennec2006intrinsic,fletcher2007riemannian}.  
Optimization problems for manifold-valued data are for example the topic of \cite{absil2009optimization,absil2004riemannian},
of \cite{grohs2016varepsilon} 
and of \cite{hawe2013separable} with a view towards learning in manifolds. We also mention related work on optimization in Hadamard spaces \cite{bacak2014convex, bavcak2013proximal} and on the Potts and Mumford-Shah models for manifold-valued data  \cite{weinmann2015mumford,storath2017jump}.  

The first definition of the Riemannian center of mass can be traced back to Fr\'echet; 
a lot of work has been done by Karcher \cite{karcher1977riemannian}. For details including an historic overview we refer to \cite{afsari2011riemannian}; see also \cite{kendall1990probability}. Due to its use as a means of averaging in a manifold,
it is employed as a basic building block in various works; e.g., \cite{pennec2006riemannian,fletcher2007riemannian,tron2013riemannian}
as well as many references in the paragraph above. 
Various contributions deal with the numerical computation of the Riemannian center of mass; 
see, for instance \cite{afsari2013convergence,ferreira2013newton,arnaudon2013medians}.

Concerning the variational denoising of signals and images, or data, there has been a lot of work on extending TV regularization \cite{rudin1992nonlinear} and related higher order methods such as TGV regularization \cite{Bredies10} to the setting of manifold-valued data in recent years.
TV functionals for manifold-valued data have been considered from the analytical side in   
~\cite{GM06,GM07,GMS93}; 
in particular, the existence of minimizers of certain TV-type energies has been shown. 
A convex relaxation based algorithm for TV regularization for $\mathbb S^1$-valued data was considered in~\cite{SC11,CS13}. 
Approaches for TV regularization for manifold-valued data are considered in \cite{LSKC13}
which proposes a reformulation as multi-label optimization problem and a convex relaxation,
in \cite{grohs2016total} which proposes iteratively reweighted minimization, 
and in \cite{weinmann2014total} which proposes cyclic and parallel proximal point algorithms.
An exact solver for the TV problem for circle-valued signals has been proposed in \cite{storath2016exact}.
Applications of TV regularization to shape spaces may be found in \cite{baust2015total}.
As with vector space data, TV regularization for manifold-valued data has a tendency to produce piecewise constant results
in regions where the data is smooth. 
As an alternative which prevents this, second-order TV type functionals for circle-valued data have been considered in \cite{bergmann2014second,bergmann2016second} and, for general manifolds, in \cite{bavcak2016second}. 
However, similar to the vector space situation, 
regularization with second-order TV type functionals tends towards producing solutions which do not preserve the edges as desired.
To address this drawback, TGV regularization for manifold-valued data has been introduced in \cite{bredies2017total}.
In the aforementioned works, only direct data terms are considered. Concerning indirect data, there is the author's works
\cite{stefanoiu2016joint} and \cite{baust2016combined} where very specific (pixel-based) data terms are considered.
There a forward-backward type algorithm is proposed to solve the corresponding inverse problem.
In this paper, we particularly focus on incorporating the manifold analogue of the operator $A$ into the manifold setting of $\TV$ and $\TGV$ regularization (as well as classical $H^1$ regularization and related terms.) To our knowledge, this has not been done before.

\subsection{Contributions}

The contributions of the paper are as follows: \emph{(i)}
 we study models for variational (Tichanov-Phillips) regularization
 for indirect measurement terms  in the manifold setup; 
 \emph{(ii)} we provide algorithms for the proposed models;
\emph{(iii)} we show the potential of the proposed algorithms by applying them to synthetic and real data.
Concerning \emph{(i)}, we derive models for variational (Tichanov-Phillips) regularization
of indirectly measured data in the manifold setup 
which in particular apply to deconvolution/deblurring and to $\TV$ and $\TGV$ regularization for manifold-valued data in a multivariate setting. 
We obtain well-posedness results for the variational problems, i.e., 
results on the existence of minimizers.
Concerning \emph{(ii)}  we provide the details for algorithmic realizations of the proposed variational
models. 
For differentiable data terms, we build on the concept of a generalized forward backward-scheme.
We extend it by a trajectory method and a Gau\ss-Seidel type update scheme
which significantly improves the performance compared with the basic scheme.
We also consider a variant based on a stochastic gradient descend part.   
For a non-differentiable data term, we employ the well-established concept of a cyclic proximal point algorithm. 
A challenging part and a central contribution consists in the computation of the (sub)gradients of the 
data terms. This involves deriving rather explicit representations of the derivative of the intrinsic mean mapping \eqref{eq:IntrMeanIntro}
with respect to the input points in the manifold
where for the intrinsic mean mapping \eqref{eq:IntrMeanIntro} there is no closed-form solution available.
Concerning \emph{(iii)}, we provide a numerical study of the proposed schemes.
We provide experiments with real and synthetic data living on the unit circle, in the two-dimensional sphere as well as in the space of positive matrices.

\subsection{Outline of the Paper}

The paper is organized as follows. 
The topic of Section~\ref{sec:model} is to derive a model for variational (Tichanov-Phillips) regularization
of indirectly measured data in the manifold setup. 
Section~\ref{sec:well-posedness} deals with the well-definedness of the proposed model.
The topic of Section~\ref{sec:algorithm} is the algorithmic realization of the proposed models.
In Section~\ref{sec:Experiments} we provide a numerical investigation of the proposed algorithms.
Finally, we draw conclusions in Section~\ref{sec:Conclusion}.

\section{Model}\label{sec:model}

 In Section~\ref{sec:TichPhilManifold} we give a detailed formulation of the manifold analogue of the variational regularization problem \eqref{eq:TichPhEuclidean} briefly explained in the introduction. In  Section~\ref{sec:Appl2Deblurring}, we consider deconvolution problems. The topic of Section~\ref{sec:SpecificRegularizers} are specific regularizers; in particular, $\TV$ and $\TGV$ regularizers.

\subsection{Variational Regularization of Inverse Problems in the Manifold Setup -- Problem Formulation}\label{sec:TichPhilManifold}

In this paper, we always consider a complete and connected Riemannian manifold $\mathcal M.$ We further consider a matrix $A$ with entries $a_{ij},$ i.e.,
\begin{equation}
	A = 
	\begin{pmatrix}
		a_{11}  & \cdots & a_{1K}  \\
		\vdots  &        & \vdots  \\
		a_{N1}  & \cdots & a_{NK},         
	\end{pmatrix}	
\end{equation} 
with unit row sums, i.e., $\sum_j a_{ij} = 1$ for all $i=1,\ldots,N.$ 
(Note that we do not require the particular items $a_{ij}$ to be nonnegative.) 
Then, the operator $\mathcal A$ induced by the kernel $A,$ acting on signals $u \in \mathcal M^k,$ is given by

\begin{align}\label{eq:DefMatVecAnaManiIth}
 \mathcal A(u)_i = \mean(a_{i,\cdot},u) = \argmin_{v \in \mathcal{M}} \ \sum\nolimits_j  a_{i,j} \dist(v,u_j)^2,
\end{align}
where $\dist$ is the Riemannian distance function induced by the Riemannian metric in $\mathcal M.$
Note that $\mathcal A$ yields an element $\mathcal A(u) \in \mathcal M^N.$ The described operation is just a suitable analogue of 
matrix-vector multiplication for data living in a manifold.
The mean  $\mean(a_{i,\cdot},u)$ is not unique for all input configurations. 
However, for data living in a small enough ball uniqueness is guaranteed. 
For a discussion, precise bounds and an extensive list of references concerning this interesting line of research we refer to~\cite{afsari2011riemannian}. 
A reasonable way to deal with the nonuniqueness is to consider the whole set of minimizers in such a case.
In this paper we do so where -- in case of nonuniqueness -- we add an additional constraint originating from our variational formulation 
as explained at the end of this subsection (after deriving the variational formulation).

In order to reconstruct a signal $u$ from given data $f \approx \mathcal A (u),$ we consider the manifold-valued analogue of Tichanov-Phillips regularization, i.e., the variational problem 
\begin{align}\label{eq:TichManiGeneral}
	\argmin_{u \in \mathcal{M}^K} \dist(\mathcal A(u),f)^p + \lambda \  R(u).
\end{align}
Here, 
\begin{align}\label{eq:DefDist4Vectors}
    \dist(\mathcal A(u),f)^p =  \sum\nolimits_{i=1}^N\dist(\mathcal A(u)_i,f_i)^p, \qquad p \in [1,\infty),
\end{align}
and $R$ is a regularizer. Examples of regularizing terms are the manifold analogues of classical $H^1$ terms, of $\TV$ and higher order generalizations such as $\TGV.$
Details on specific regularizers are given in Section~\ref{sec:SpecificRegularizers} below. In this paper, we are particularly interested in incorporating the operator $\mathcal A$ into the manifold setting of $\TV$ and $\TGV$ regularization 
(as well as classical $H^1$ regularization and related terms.) 

As pointed out above the set of minimizers in \eqref{eq:DefMatVecAnaManiIth} may be non-unique. In such a case and in view of \eqref{eq:DefDist4Vectors} we overload the definition \eqref{eq:DefMatVecAnaManiIth} in the context of \eqref{eq:TichManiGeneral} by
\begin{align}\label{eq:DefMatVecAnaManiOverload}
\mathcal A(u)_i = \{v \in \mathcal M: v \text{ minimizes \eqref{eq:DefMatVecAnaManiIth} and } v \text{ minimizes } \dist(v,f_i)
\text{ in } \mean(a_{i,\cdot},u).\}
\end{align}
which means that we choose the average closest to the data $f_i.$ (This seems reasonable since the chosen averages intuitively explain the data best.)

\subsection{Manifold Analogues of Deconvolution Problems}\label{sec:Appl2Deblurring}

Shift invariant linear systems are particularly interesting when processing data in a linear space; 
for instance, blur is often modeled 
by a shift invariant system; further, technical systems are often built in a way to (approximately) fulfill shift invariance. 
In the setup of real-valued data, a shift invariant system is essentially (up to boundary treatment) described by a matrix $A$
which is diagonally constant, i.e. $a_{i,j} = a_{i-1,j-1}=a'_{i-j},$ for all $i,j$ 
(where, with boundary treatment, one gets cyclic matrices or cut-off Toeplitz matrices). 
Here, $a'$ denotes the corresponding convolution kernel. Using \eqref{eq:DefMatVecAnaManiIth} for the resulting matrix $A$ (with the weights  $a_{i,j} = a'_{i-j}$ given by the corresponding convolution kernel $a'$) in \eqref{eq:TichManiGeneral}, we obtain the manifold analogue 
of the deconvolution problem for $a'.$ 
 
We explicitly point out that the definition in \eqref{eq:DefDist4Vectors} which implements the first term in 
\eqref{eq:TichManiGeneral} is not restricted to a univariate setting but does refer to a multivariate general setting:
either  vectorize the multivariate data and obtain a corresponding resulting matrix $A$ defining the operator $\mathcal A,$ 
or just read the index $i$ in \eqref{eq:DefDist4Vectors} as a multiindex. For instance, 
using the multiindex notation $i=(k,l),$ $j=(r,s)$, 
the manifold analogue $\mathcal A$ of the  convolution operator $A$ given by 
\begin{align}
Au_{j} = Au_{rs} =  \sum_{k,l}  a'_{r-k, s-l} \ u_{kl} = \sum_i a'_{j-i}  u_{i}
\end{align}
reads
\begin{align}\label{eq:ConvMani2D}
\mathcal A(u)_{j} =  \mathcal A(u)_{r,s} = \argmin_{v \in \mathcal{M}} \ \sum_{k,l}  a'_{r-k, s-l}  \ \dist(v,u_{kl})^2,
\end{align}
where $a'$ denotes the convolution kernel. 
Hence, eq.\ \eqref{eq:TichManiGeneral} with data term \eqref{eq:ConvMani2D} constitutes the explicit variational problem formulation for the multivariate deconvolution problem for manifold-valued data.

\subsection{Problems with Specific Regularizers} \label{sec:SpecificRegularizers}

A very prominent method for regularizing images is the Rudin-Osher-Fatemi (ROF) model \cite{rudin1992nonlinear} which employs TV regularization. 
Using a TV term as regularizer in \eqref{eq:TichManiGeneral} we obtain a manifold version with indirect data term of the discretized ROF model in its Lagrange formulation.  It reads    
\begin{align}\label{eq:DefTivchManiTV}
\argmin_{u \in \mathcal{M}^K} \dist(\mathcal A(u),f)^p + \lambda \  \TV(u).
\end{align}
In the univariate setting, 
$
\TV(u) = \sum_l \dist(u_l,u_{l-1}).
$
In the bivariate setting we employ the anisotropic discretization 
\begin{align}\label{eq:DefTVreg}
R(u) =  \TV(u) = \sum\nolimits_{l,k} \dist(u_{l,k},u_{l-1,k})  +  
\sum\nolimits_{l,k} \dist(u_{l,k},u_{l,k-1}).
\end{align}
In the manifold context, these regularizers have been considered in \cite{weinmann2014total}.
Anisotropy effects resulting from this discretization can be reduced by incorporating further directional difference (such as diagonal differences); for a discussion, we refer to \cite{weinmann2015mumford}.
The generalizations for the general multivariate setup are obvious.

A more general first order regularizing term 
-- also including the (discrete) manifold analogues of classical $H^1$ Hilbert-Sobolev seminorm-regularization --
is easily obtained by letting
\begin{align}\label{eq:VqRegulizer}
R(u) = V^q(u) = \sum\nolimits_{l,k} \dist(u_{l,k},u_{l-1,k})^q  +  
\sum\nolimits_{l,k} \dist(u_{l,k},u_{l,k-1})^q, \qquad q \in [1,\infty).
\end{align}
We then consider the model \eqref{eq:TichManiGeneral} with regularizer $R$ given by 
\eqref{eq:VqRegulizer}.
Classical $H^1$ regularization corresponds to letting $q=2,$ whereas $\TV$ regularization corresponds to $q=1.$

A prominent topic in the variational regularization of image and imaging data is to reduce the staircasing effects observed when using the $\TV$ regularizer. A first means to do so, is to add a second order $\TV$ term, i.e., to consider the 
regularizer $R(u)$ defined by 
\begin{align}\label{eq:mixedFirstSecond}
R(u) = \mu_1 \TV(u) + \mu_2 \TV_2(u), \qquad  \mu_1, \mu_2 \geq 0.  
\end{align}
For manifold-valued data we have considered this regularizer in \cite{bergmann2016second} as well as in 
\cite{bergmann2014second} for data living on the unit circle. In the univariate setting, the second order TV type term is given by 
$
 \TV_2(u) = \sum_i d_2(u_{i-1},u_{i},u_{i+1}),
$
where 
$
d_2(u_{i-1},u_{i},u_{i+1}) = \dist (u_{i},\midp(u_{i-1},u_{i+1})) 
$
and $\midp(u_{i-1},u_{i+1})$ denotes the (geodesic) midpoint of $u_{i-1}$ and $u_{i+1}.$ In the multivariate setting for manifold valued data,
\begin{align}\notag
\mu_2 \TV_2(u) = 
\mu_{2,1} \sum\nolimits_{l,k}  d_2(u_{l-1,k},u_{l,k},u_{l+1,k})  
&+ \mu_{2,2} \sum\nolimits_{l,k} d_{1,1}(u_{l-1,k-1},u_{l,k-1},u_{l-1,k},u_{l,k})\\ 
&+ \mu_{2,1} \sum\nolimits_{l,k} d_2(u_{l,k-1},u_{l,k},u_{l,k-1})\label{eq:DefTV2}
\end{align}
with the weights $\mu_{2,1},\mu_{2,2}$ and the diagonal second order terms $d_{1,1}$ which are given by
$d_{1,1}(u_{l-1,k-1},$ $u_{l,k-1},u_{l-1,k},u_{l,k})$ 
$= \dist (\midp(u_{l,k-1},u_{l-1,k}),\midp(u_{l-1,k-1},u_{l,k})).$

In order to improve the quality of the results further, 
we have considered TGV regularization (with a direct data term) in the manifold setting in \cite{bredies2017total}. 
The TGV functional with an indirect measurement term in a discrete setting for manifold-valued data reads 
\begin{align}
\argmin_{u \in \mathcal{M}^K} \dist(\mathcal A(u),f)^p + \  \TGV_\lambda(u).
\end{align}
In a univariate setting, 
$
\TGV_\lambda(u) = 
\argmin_{v \in \mathcal{M}^K} \lambda_0 \sum_i \dist(u_{i+1},v_i) + 
\lambda_1 \sum_i D ([u_{i},v_{i}],[u_{i-1},v_{i-1}]),
$ 
where $D$ generalizes the distance between 
the ``differences'' $[u_{i},v_{i}]$ and  $[u_{i-1},v_{i-1}]$
which are here represented as tuples $[u_{i},v_{i}]$ of points $u_{i},v_{i}$ in the manifold.
One instantiation of $D$ employed in \cite{bredies2017total} uses the parallel transport in the manifold: 
$D ([u_{i},v_i],[u_{i-1}, v_{i-1}]) =  \| log_{u_{i}} v_{i} -  \pt_{u_{i-1},u_{i}} \log_{u_{i-1}} v_{i-1} \|_{u_{i}}.$
Here $\log$ denotes the inverse of the Riemannian $\exp$ mapping such that $\log_{u_{i}} v_{i}$
denotes a tangent vector sitting in $u_{i}$ ``pointing'' to $v_{i}.$ 
The symbol $\pt_{u_{i-1},u_{i}}$ denotes the parallel transport along a shortest geodesic connecting $u_{i-1}$ and $u_{i}$
such that 
$\log_{u_{i}} v_{i} -  \pt_{u_{i-1},u_{i}} \log_{u_{i-1}} v_{i-1}$ 
is a tangent vector sitting in $u_{i}$ and, 
$ \|\cdot \|_{u_{i}}$ 
denotes the norm induced by the Riemannian scalar product in the point $u_{i}.$  
The other instantiation of $D$ employed in \cite{bredies2017total} uses Schild's ladder:
$D ([u_{i},v_{i}],[u_{i-1},v_{i-1}]) = \dist(S(u_{i-1},u_{i},v_{i-1}) ,v_i),$ 
where the Schild's point $S(u_{i-1},u_{i},v_{i-1})$  $= [u_{i-1},\midp(u_{i},v_{i-1})]_2,$ 
i.e., the point obtained by evaluating the (constant speed) geodesic $\gamma$ connecting $\gamma(0)=u_{i-1}$ 
and the midpoint $\gamma(1) = \midp(u_{i},v_{i-1})$ of $u_{i}$ and $v_{i-1}$ at $t=2,$ 
that is, $S(u_{i-1},u_{i},v_{i-1}) = \gamma(2).$ The Schild variant may be seen as an approximation to the parallel variant which has the advantage of faster computation times.  
In the multivariate setup,
\begin{align}\notag
R(u) =  &\TGV_\lambda(u)  
= \min _{v^ 1,v^ 2} 
\lambda_1 \sum _{l,k}  \dist(u_{l+1,k},v^ 1_{l,k}) + \lambda_1 \sum _{l,k} \dist(u_{l,k+1},v^ 2_{l,k})\\
&+ \lambda_0 \sum_{l,k}  D\big([u_{l,k},v^1_{l,k}],[u_{l-1,k},v^1_{l-1,k}]\big) 
+ \lambda_0 \sum_{l,k} D\big([u_{l,k},v^2_{l,k}],[u_{l,k-1},v^2_{l,k-1}]\big)  \notag  \\
&  +\lambda_0 \sum_{i,j} D^{\text{sym}} ([u_{i,j},v^1_{i,j}],[u_{i,j},v^2_{i,j}],[u_{i,j-1},v^1_{i,j-1}],[u_{i-1,j},v^2_{i-1,j}]),
\label{eq:DefTGVreg} 
\end{align}
with $D$ as in the univariate setup explained above; $D^{\text{sym}}$ realizes a symmetrized gradient in the manifold setting which is defined in terms of $D.$ Due to the space consumption for providing details on $D^{\text{sym}},$ we omit the precise definitions here and refer to \cite{bredies2017total} for details.

\section{Existence Results}\label{sec:well-posedness}

In this section we derive well-posedness results of the variational problem \eqref{eq:TichManiGeneral} with indirect measurement terms. In particular, we consider the regularizers presented in Section~\ref{sec:SpecificRegularizers}.
For the $\TV$ regularizer \eqref{eq:DefTVreg} and the $V^q$ regularizers, we get the existence of minimizers without additional constraints on the measurement operator $\mathcal A$; the corresponding result is formulated as 
Theorem~\ref{thm:ExistenceMinimizers}. It 
also applies to the mixed first and second order regularizer \eqref{eq:mixedFirstSecond} (with $\mu_1 \neq 0$). 
For the $\TGV$ regularizer \eqref{eq:DefTGVreg} and the (pure) second order regularizer \eqref{eq:mixedFirstSecond} (with $\mu_1 = 0$) we get the existence of minimizers with some constraints on the measurement operator $\mathcal A$; the corresponding results are formulated as 
Theorem~\ref{thm:ExistenceMinimizersSecondOrder} and Theorem~\ref{thm:ExistenceMinimizersTGV}. 
For compact manifolds such as the spheres or the Grassmannians, however, we get the existence of minimizers 
of the $\TGV$ regularizer \eqref{eq:DefTGVreg} and the  second order TV regularizer $\TV_2$ of \eqref{eq:mixedFirstSecond} with $\mu_1 = 0$ without additional constraints on the measurement operator $\mathcal A.$ 

We first consider a general regularizer $R$ and show that, 
under certain assumptions on the regularizing term $R,$ 
the properties of the indirect measurement term \eqref{eq:DefDist4Vectors} in \eqref{eq:TichManiGeneral} yield the existence of a minimizer. We will apply this result to the $\TV$ regularizer \eqref{eq:DefTVreg} and the $V^q$ regularizers.
Later on, we will also derive a variant which applies to the second order regularizers. The precise formulation is as follows.

\begin{theorem}\label{thm:ExistenceCondR}
	We consider a sequence of signals $u^{(n)},$ and use the notation $\diam(u^{(n)})$ to denote the diameter of $u^{(n)}$ (seen as a set).
	If $R$ is a regularizing term such that $R(u^{(n)}) \to \infty,$ as $\diam(u^{(n)}) \to \infty,$ 
	and $R$ is lower semicontinuous, then the variational problem \eqref{eq:TichManiGeneral} with indirect measurement data term \eqref{eq:DefDist4Vectors} has a minimizer.
\end{theorem}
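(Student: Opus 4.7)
The plan is to apply the direct method of the calculus of variations to the functional $F(u) = \dist(\mathcal A(u), f)^p + \lambda R(u)$ on $\mathcal M^K$. First I would fix a minimizing sequence $(u^{(n)})$ for $F$. Since $F(u^{(n)})$ is eventually bounded by some constant $M$, both $R(u^{(n)}) \leq M/\lambda$ and each term $\dist(\mathcal A(u^{(n)})_i, f_i) \leq M^{1/p}$ hold for large $n$. The coercivity hypothesis on $R$ then forces the diameters $D_n := \diam(u^{(n)})$ to stay in a bounded interval.

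The next step is to upgrade bounded diameter to a uniform bound on the sequence, so that it lies in a fixed compact subset of $\mathcal M^K$. Using the defining optimality of $\mathcal A(u^{(n)})_i$ as a minimizer of $v \mapsto \sum_j a_{ij}\dist(v, u^{(n)}_j)^2$, together with the unit row-sum property of $A$ (passing through normal coordinates at $u^{(n)}_1$ and controlling curvature corrections on the bounded region), one obtains a bound $\dist(\mathcal A(u^{(n)})_i, u^{(n)}_j) \leq c_A\, D_n$ with $c_A$ depending only on the matrix $A$ (in particular on $\max_i \sum_j |a_{ij}|$). Combining this with $\dist(\mathcal A(u^{(n)})_1, f_1) \leq M^{1/p}$ yields $\dist(u^{(n)}_j, f_1) \leq c_A D_n + M^{1/p}$ for all $j$, so the entire sequence sits in a closed ball of $\mathcal M^K$. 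By completeness of $\mathcal M$ and the Hopf--Rinow theorem, this ball is compact, and we extract a convergent subsequence $u^{(n_k)} \to u^\ast$.

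The final step is lower semicontinuity of $F$ along this subsequence. The regularizer contributes $R(u^\ast) \leq \liminf_k R(u^{(n_k)})$ by assumption. For the data term, the auxiliary energies $g_{n,i}(v) := \sum_j a_{ij}\dist(v, u^{(n_k)}_j)^2$ converge locally uniformly to $g_i^\ast(v) := \sum_j a_{ij}\dist(v, u^\ast_j)^2$, so any cluster point of the minimizing set $\mathcal A(u^{(n_k)})_i$ belongs to the minimizing set of $g_i^\ast$, i.e., is an admissible value for $\mathcal A(u^\ast)_i$ in the sense of \eqref{eq:DefMatVecAnaManiOverload}. The tiebreaker in \eqref{eq:DefMatVecAnaManiOverload}, which chooses the mean closest to $f_i$, then delivers $\liminf_k \dist(\mathcal A(u^{(n_k)})_i, f_i) \geq \dist(\mathcal A(u^\ast)_i, f_i)$ for each $i$, hence $\liminf_k \dist(\mathcal A(u^{(n_k)}), f)^p \geq \dist(\mathcal A(u^\ast), f)^p$. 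Adding the two inequalities shows $F(u^\ast) \leq \liminf_k F(u^{(n_k)}) = \inf F$, so $u^\ast$ is a minimizer.

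The main obstacle is the last step, namely the semicontinuity of the indirect data term. The set-valued map $u \mapsto \mathcal A(u)_i$ is only upper semicontinuous in a Hausdorff sense and may jump on the degenerate configurations where $g_i^\ast$ admits several minimizers; without the overloaded definition \eqref{eq:DefMatVecAnaManiOverload} the inequality on $\liminf$ could fail. The argument therefore crucially uses $\Gamma$-convergence of the auxiliary energies $g_{n,i} \to g_i^\ast$ together with the tiebreaker selecting the mean closest to $f_i$ to ensure that every limit of $\mathcal A(u^{(n_k)})_i$ sits inside the admissible set and thus has distance to $f_i$ at least as large as the selected representative $\mathcal A(u^\ast)_i$.
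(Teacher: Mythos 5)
Your proposal is correct and follows essentially the same route as the paper: your compactness step is the paper's coercivity argument for $F$ built on Lemma~\ref{lem:MeansInBoundedSet} (weighted means of points in $B(x,r)$ lie in $B(x,Cr)$ with $C$ depending only on the weight vector), and your semicontinuity step for the data term is exactly Lemma~\ref{lem:DatTermLSC}, i.e.\ the cluster points of the means minimize the limiting energy and the tiebreaker in \eqref{eq:DefMatVecAnaManiOverload} yields the $\liminf$ inequality. One small caveat: the bound $\dist(\mathcal A(u^{(n)})_i,u^{(n)}_j)\le c_A D_n$ with $c_A$ depending only on $A$ should be obtained purely from the triangle inequality as in Lemma~\ref{lem:MeansInBoundedSet}, not via normal coordinates with curvature corrections, since at that stage the location of $u^{(n)}$ is not yet controlled and curvature-dependent constants need not be uniform over $\mathcal M$.
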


In order to prove Theorem~\ref{thm:ExistenceCondR}, we start showing some lemmata.
The first lemma is a simple consequence of the triangle inequality as for instance pointed out in \cite{afsari2011riemannian}
(for nonnegative weights); the lemma provides an extension to the situation including nonnegative weights.
\begin{lemma} \label{lem:MeansInBoundedSet}
	Let all $u_j,$ $j=1,\ldots,K,$ be contained in a ball $B(x,r)$ of radius $r$ around $x \in \mathcal M.$
	Then every mean $v^\ast \in \mean(a,u) = \argmin_{v \in \mathcal{M}} \ \sum_j  a_{j} \dist(v,u_j)^2$  in the sense of \eqref{eq:DefMatVecAnaManiIth} for a weight vector $a$ with $\sum a_j>0,$ is contained in the larger ball $B(x,R),$ with $R$ independent of the particular $u,$
	(but $R$ dependent on $r$ and $a.$)	
\end{lemma}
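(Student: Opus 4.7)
The plan is to get an explicit bound on $D := \dist(v^\ast,x)$ by comparing the value $F(v^\ast)$ of the weighted sum-of-squared-distances functional
\[
F(v) = \sum\nolimits_j a_j \,\dist(v,u_j)^2
\]
at $v^\ast$ with its value at the reference point $x$, and exploiting the constraint $\sum_j a_j > 0$ together with the triangle inequality. The key point, and the source of the only real difficulty, is that some of the $a_j$ may be negative, so we cannot directly use the standard convex-combination bound; we have to separate the positive and negative parts of the weight vector.

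First I would set $D = \dist(v^\ast, x)$ and assume $D \ge r$ (otherwise we are already done with $R=r$). Applying the triangle inequality for each $j$, since $u_j \in B(x,r)$, we have
\[
D - r \;\le\; \dist(v^\ast, u_j) \;\le\; D + r.
\]
Then I would split the indices: let $J^+ = \{j : a_j \ge 0\}$, $J^- = \{j : a_j < 0\}$, and set $\alpha = \sum_{j\in J^+} a_j$, $\beta = -\sum_{j\in J^-} a_j \ge 0$, so that $S := \sum_j a_j = \alpha - \beta > 0$. For $j \in J^+$ I would bound $a_j \dist(v^\ast,u_j)^2 \ge a_j(D-r)^2$, and for $j \in J^-$, since $a_j < 0$ and $\dist(v^\ast,u_j) \le D+r$, I would bound $a_j \dist(v^\ast,u_j)^2 \ge a_j (D+r)^2$. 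Summing yields
\[
F(v^\ast) \;\ge\; \alpha (D-r)^2 - \beta (D+r)^2 \;=\; S(D-r)^2 - 4\beta D r.
\]

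Next I would bound $F(v^\ast)$ from above by $F(x)$, using that $v^\ast$ is a minimizer and that $\dist(x,u_j) \le r$:
\[
F(v^\ast) \;\le\; F(x) \;\le\; (\alpha + \beta)\, r^2 \;=\; (S + 2\beta)\, r^2.
\]
Combining the two inequalities gives a quadratic inequality in $D$,
\[
S\,D^2 \;-\; 2r(S+2\beta)\,D \;-\; 2\beta r^2 \;\le\; 0,
\]
whose positive root provides an explicit bound $D \le R$ with
\[
R \;=\; \tfrac{r}{S}\Bigl(S+2\beta + \sqrt{(S+2\beta)^2 + 2 S \beta}\Bigr),
\]
depending only on $r$ and on the weight vector $a$, not on the particular points $u_j$. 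This proves $v^\ast \in B(x,R)$.

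The main obstacle, as noted, is the possible negativity of the weights $a_j$: the classical one-line argument that places the Fr\'echet mean in the convex hull of the data relies on all $a_j \ge 0$. Splitting into $J^\pm$ and carefully using the opposite ends of the triangle-inequality interval on each part circumvents this, and the assumption $S>0$ is exactly what is needed so that the leading coefficient of the resulting quadratic in $D$ is positive, producing a finite upper bound.
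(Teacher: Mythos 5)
Your proof is correct and follows essentially the same strategy as the paper's: split the weight vector into its positive and negative parts, sandwich $\dist(v^\ast,u_j)$ between $D-r$ and $D+r$ via the (reverse) triangle inequality, and compare the functional value at $v^\ast$ with its value at the center $x$. The only difference is organizational — you solve the resulting quadratic in $D$ directly to extract an explicit radius, whereas the paper fixes a threshold $R$ up front and shows no point outside $B(x,R)$ can be a minimizer — and your bound $F(x)\le(\alpha+\beta)r^2$ is handled a bit more carefully than the paper's corresponding estimate.
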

\begin{proof}
	Let $A^+$ be the sum of the positive weights in the weight vector $a,$
	and $A^-$ be the sum of the absolute values of the negative weights in the weight vector $a,$ respectively.
	By our assumption $A^+ > A^-,$ or, $A^0:=A^+-A^- > 0.$
	We let 
	\begin{equation}\label{eq:RatioRr}
		R := C \ r  \quad \text{ with } \quad  C := \frac{2(A^+ + A^-)}{A^0}. 
	\end{equation}
	Let $v' \not\in B(x,R).$ We estimate the functional value of $v'$ from below using $R':= \dist(v',x)$.
	 For any $u_j,$ $j=1,\ldots,K,$ 
	the reverse triangle inequality implies that
	$\dist(v',u_j) \geq R'-r,$
	and the triangle inequality implies $\dist(v',u_j) \leq R'+r.$
	Together, 
	\begin{align} \label{eq:estDistEnergyBall}
		\sum_j  a_{j} \dist(v',u_j)^2 &= 
		\sum_{j:a_j \text{ positive}} a_{j} \ \dist(v,u_j)^2  
		-  \sum_{j:a_j \text{ negative}} |a_{j}|  \ \dist(v,u_j)^2  \\
		&\geq \sum _{j:a_j \text{ positive}} a_{j} \ (R'-r)^2  - \sum_{j:a_j \text{ negative}} |a_{j}| \	 
		= A^+ (R'-r)^2 - A^{-} (R'+r)^2. \notag
	\end{align}
	We further have  
	$
	A^+ (R'-r)^2 - A^{-} (R'+r)^2 
	= R' (A^0 R'- 2 (A^+ + A^-) r) + A^0  r^2.
	$
	For the term in brackets, we have 
	\begin{align} \label{eq:termInBraketsLTzero}
	A^0 R'- 2 (A^+ + A^-) r > A^0 R - 2 (A^+ + A^-) r  =  \left(A^0  \frac{2(A^+ + A^-)}{A^0}   - 2 (A^+ + A^-)\right) r   =  0 
	\end{align} 
	by our definition of $R$ in \eqref{eq:RatioRr}.
	Applying the estimate \eqref{eq:termInBraketsLTzero} to \eqref{eq:estDistEnergyBall}, we see that   
	$\sum_j  a_{j} \dist(v',u_j)^2 > A^0  r^2.$ On the other hand,
	for the center point $x$ of the balls, we have  
	$
	\sum_j  a_{j} \dist(v',x)^2 \leq \sum_j  a_{j}  r^2 = A^0  r^2.
	$
	Hence, no point $v' \not\in B(x,R)$ can be a minimizer which shows the assertion.

\end{proof}
Note that, for nonnegative weights, the constant $C$ in \eqref{eq:RatioRr} equals two.

\begin{lemma} \label{lem:DatTermLSC}
	The data term \eqref{eq:DefDist4Vectors} is lower semicontinuous.
\end{lemma}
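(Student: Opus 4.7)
The plan is to show sequential lower semicontinuity: given $u^{(n)} \to u$ in $\mathcal M^K$, I want to verify
\[
\liminf_{n\to\infty} \sum_{i=1}^{N} \dist(\mathcal A(u^{(n)})_i, f_i)^p \;\geq\; \sum_{i=1}^{N} \dist(\mathcal A(u)_i, f_i)^p.
\]
By superadditivity of $\liminf$, it suffices to treat each summand $i$ separately, i.e., to show $\liminf_n \dist(\mathcal A(u^{(n)})_i, f_i)^p \geq \dist(\mathcal A(u)_i, f_i)^p$. Fix such an $i$.

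First I would pass to a subsequence (not relabeled) along which the liminf is attained, and for each $n$ select a representative $v_n \in \mathcal A(u^{(n)})_i$ realizing the minimal distance to $f_i$ in the sense of the overloaded definition~\eqref{eq:DefMatVecAnaManiOverload}. Since $u^{(n)}$ converges, all entries lie eventually in a common ball $B(x,r)$ of $\mathcal M$; Lemma~\ref{lem:MeansInBoundedSet} then confines every $v_n$ to an a priori ball $B(x,R)$ depending only on $r$ and the (fixed) weight row $a_{i,\cdot}$. Because $\mathcal M$ is complete and finite-dimensional, Hopf–Rinow provides compactness of this closed ball, so I may extract a further subsequence with $v_{n_k} \to v^*$ for some $v^*\in\mathcal M$.

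Next I would identify $v^*$ as a minimizer of the Karcher functional for the limit input. Define $F(v,w) = \sum_j a_{i,j}\,\dist(v,w_j)^2$ on $\mathcal M \times \mathcal M^K$, which is jointly continuous by continuity of the Riemannian distance. For an arbitrary test point $w\in\mathcal M$, the minimization property of $v_{n_k}$ gives $F(v_{n_k}, u^{(n_k)}) \leq F(w, u^{(n_k)})$; passing to the limit yields $F(v^*, u) \leq F(w, u)$, so $v^*$ is a (possibly non-unique) Karcher mean for $u$. Hence $v^*$ belongs to the set of minimizers appearing in~\eqref{eq:DefMatVecAnaManiOverload} for $u$, and by the tie-breaking rule that selects the minimizer closest to $f_i$ we have $\dist(v^*, f_i) \geq \dist(\mathcal A(u)_i, f_i)$. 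Combined with continuity of $\dist(\cdot,f_i)^p$, this gives
\[
\liminf_n \dist(\mathcal A(u^{(n)})_i,f_i)^p \;=\; \lim_k \dist(v_{n_k},f_i)^p \;=\; \dist(v^*,f_i)^p \;\geq\; \dist(\mathcal A(u)_i,f_i)^p,
\]
and summing over $i$ concludes the proof.

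The main obstacle is precisely the non-uniqueness of the intrinsic mean: the set-valued map $u \mapsto \mathcal A(u)_i$ is not continuous in general, so one cannot directly pass to the limit inside $\mathcal A(\cdot)_i$. The resolution is the two-step argument above—use boundedness (Lemma~\ref{lem:MeansInBoundedSet}) plus local compactness to extract a limit, then exploit the joint continuity of the Karcher functional to verify that the limit falls back into the minimizer set, after which the tie-breaking clause in~\eqref{eq:DefMatVecAnaManiOverload} provides the desired one-sided inequality.
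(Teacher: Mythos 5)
Your proof is correct and follows essentially the same route as the paper's: reduce to a single summand, pick the representatives of $\mathcal A(u^{(n)})_i$ closest to $f_i$, confine them via Lemma~\ref{lem:MeansInBoundedSet}, extract a convergent subsequence along the liminf, and show the limit is again a Karcher mean for $u$ so that the tie-breaking rule in \eqref{eq:DefMatVecAnaManiOverload} yields the one-sided inequality. The only cosmetic difference is that you identify the limit as a minimizer by passing directly to the limit in the inequality $F(v_{n_k},u^{(n_k)})\leq F(w,u^{(n_k)})$, whereas the paper argues by contradiction; these are equivalent.
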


\begin{proof}
	In order to show that the sum in \eqref{eq:DefDist4Vectors} is a lower semicontinuous function of $u,$
	it is enough to show that each summand $u \mapsto \dist(\mathcal A(u)_i,f_i)^p$ is 
	a lower semicontinuous function on $\mathcal M^K$ for each $i=1,\ldots,N.$
	To this end, we consider a sequence of $(u^{(n)})_{n \in \mathbb N},$ each $u^{(n)} \in \mathcal M^K,$
	such that $u^{(n)} \to u$ in $ \mathcal M^K,$ as $n \to \infty.$ By the monotony of the power functions, it is sufficient to show that 	
	\begin{equation}\label{eq:ToShowLSC}
	\dist(\mathcal A(u)_i,f_i)  \leq   \liminf_{n}  \dist(\mathcal A(u^{(n)})_i,f_i), \quad \text{ for each $i=1,\ldots,N.$}
	\end{equation}
	We let $v^{(n)} \in \mathcal A(u^{(n)})_i,$ i.e., $v^{(n)}$ is a minimizer of 
	$ v \mapsto \sum_j  a_{i,j} \dist(v,u^{(n)}_j)^2,$ nearest to $f_i.$
	Similarly, let $v^\ast \in \mathcal A(u)_i$ meaning that $v^\ast$ is a minimizer of
	$ 
	v \mapsto \sum_j  a_{i,j} \dist(v,u_j)^2,
	$
	also nearest to $f_i.$
	Since, by assumption, $u^{(n)} \to u$ in $\mathcal M^K,$ we find $x \in \mathcal M$ and a positive number $r$ such 
	that all $u^{(n)}_j$ together with all $u_j,$ 
	$j=1,\ldots,K,$ 
	$n \in \mathbb N$ are contained in a common ball $B(x,r)$ around $x$ with radius $r.$ Then Lemma~\ref{lem:MeansInBoundedSet} tells us that, for every $i=1,\ldots,N$, there is a positive number $R_i$
	such that all $v^{(n)}_i,$ $n \in \mathbb N$, together with $v^\ast_i,$ lie in  $B(x,R_i).$ 
	Taking $R = \max_{i=1,\ldots,N} R_i,$ all $v^{(n)}_i,$ $n \in \mathbb N$, together with $v^\ast_i,$ lie in  $B(x,R)$ 
	for all $i=1,\ldots,N.$ Hence, the $v^{(n)}$ form a bounded sequence in  $\mathcal M^N.$
	
	Now, in view of the right hand side of \eqref{eq:ToShowLSC}, we take a subsequence $v^{(n_k)}$ of $v^{(n)}$ such that 
	\begin{equation}\label{eq:ApproxRHsF}
	\lim_{k \to \infty } \dist(v^{(n_k)}_i,f_i)  = \liminf_{n}   \dist(\mathcal A(u^{(n)})_i,f_i), \quad \text{ for all $i=1,\ldots,N.$}
	\end{equation}
	Since, by our above argumentation, the $v^{(n)}$ form a bounded sequence in  $\mathcal M^N,$
	the subsequence  $v^{(n_k)}$ is bounded as well, and  since $\mathcal M$ is geodesically complete,   
	we may extract a convergent subsequence of  $v^{(n_k)}$ which we (abusing notation for better readability) also denote by  
	 $v^{(n_k)}.$ We let 
	 \begin{equation}\label{eq:DefvPrime}
	 v' := \lim\nolimits_{k \to \infty} v^{(n_k)}.
	 \end{equation}
	 
	 Next, we show that 
	 \begin{equation}\label{eq:vPrimeIminimizes}
	 	v'_i \in  \argmin_{v \in \mathcal{M}} \ \sum\nolimits_j  a_{i,j} \dist(v,u_j)^2.	
	 \end{equation}
	 To see this, we assume that $v'_i$ were not a minimizer in \eqref{eq:vPrimeIminimizes}.
	 Then there were a minimizer $v^\ast_i$ of the sum in \eqref{eq:vPrimeIminimizes} such that 
	 $\sum_j  a_{i,j} \dist(v^\ast_i,u_j)^2 < \sum_j  a_{i,j} \dist(v'_i,u_j)^2.$ 
	 Since  $v^{(n_k)} \to v'$ and $u^{(n_k)} \to u$   as $k \to \infty,$
	  we have 
	  \begin{align}\label{eq:Eq1ShowLS}
	    \sum\nolimits_j  a_{i,j} \dist(v'_i,u_j)^2 = \lim_{k \to \infty}	\sum\nolimits_j  a_{i,j} \dist(v^{(n_k)}_i,u^{(n_k)}_j)^2.
	  \end{align}
	 Hence, for sufficiently large $k_0,$ we have that  
	   $\sum_j  a_{i,j} \dist(v^\ast_i,u^{(n_{k_0})}_j)^2 < \sum_j  a_{i,j} \dist(v^{(n_{k_0})},u^{(n_{k_0})}_j)^2.$
	 This contradicts $v^{(n_{k_0})}$ being a minimizer of  
	 the mapping $v \mapsto \sum\nolimits_j  a_{i,j} \dist(v,u^{(n_{k_0})}_j)^2,$
	 and thus shows \eqref{eq:vPrimeIminimizes}.
	 Summing up,
	 \begin{align}
	 	 \dist(\mathcal A(u)_i,f_i) \leq  \dist(v_i,f_i) =  \liminf_{n}   \dist(\mathcal A(u^{(n)})_i,f_i), 
	 \end{align}
	 for all $i.$
	 The inequality is by \eqref{eq:vPrimeIminimizes}, 
	 and the equality is by \eqref{eq:ApproxRHsF}.
	 This shows \eqref{eq:ToShowLSC} which completes the proof.	
\end{proof}

\begin{proof}[Proof of Theorem~\ref{thm:ExistenceCondR}]
	The lower semicontinuity of the data term \eqref{eq:DefDist4Vectors} is shown in Lemma~\ref{lem:DatTermLSC}.
	Together with the assumed  lower semicontinuity of the regularizing term $R,$ the functional in \eqref{eq:TichManiGeneral} is 
	lower semicontinuous. Hence, in order to show the assertion of the theorem, we show that the functional 
	\begin{align}\label{eq:FunctionalFinExProof}
	  F(u) = \dist(\mathcal A(u),f)^p + R(u),
	\end{align}
	of  \eqref{eq:TichManiGeneral} is coercive, i.e.,
	we show that there is $\sigma \in \mathcal M^K$ such that for all sequences $u^{(n)}$ in $\mathcal M^K,$
	\begin{align}\label{eq:defCoerciveMF}
	\dist(u^{(n)},\sigma) \to \infty \ \text{ as } \ n \to \infty    \qquad   
	 \text{implies } \qquad   F(u^{(n)}) \to \infty \ \text{ as } \ n \to \infty.
	\end{align}
	Note that, by the reverse triangle inequality, we may replace the $\sigma \in \mathcal M^K$ in \eqref{eq:defCoerciveMF}
	by any other  $\sigma' \in \mathcal M^K;$ in other words, if \eqref{eq:defCoerciveMF} is true for one element in $\mathcal M^K,$
	it is true for any other element in $\mathcal M^K$ as well.
	
	Towards a contradiction suppose that $F$ is not coercive. Then there is $\sigma \in \mathcal M^K$ and a sequence $u^{(n)}$ in
	 $\mathcal M^K,$ such that 
	 \begin{align}\label{eq:2contradict} 
	 \dist(u^{(n)},\sigma) \to \infty \  \text{ and } \ F(u^{(n)})  \text{ does not converge to $\infty.$} 
	 \end{align}  
	 Hence, there is a subsequence of $u^{(n)}$ with bounded value of $F,$ i.e., 
	 there is a subsequence $u^{(n_k)}$ of $u^{(n)},$ as well as a constant $C>0$ such that 
	\begin{align}\label{eq:FisBounded4Subseq} 
	   F(u^{(n_k)}) \leq C  \qquad \text{for all $k \in \mathbb N.$}
	\end{align} 
	Hence, $R(u^{(n_k)}) \leq C.$ This implies that $\diam(u^{(n_k)})$ does not converge to $\infty.$
    (Recall that, by assumption, $\diam(u^{(n)}) \to \infty$ implies that $R(u^{(n)}) \to \infty.$ )
	Therefore, there is another subsequence of $u^{(n_k)}$ which we, for the sake of readability, again denote by  
	$u^{(n_k)}$ and a constant $C'>0$ such that 
	\begin{align} \label{eq:diamBounded4allK}
	\diam(u^{(n_k)}) \leq C'  \qquad \text{for all $k \in \mathbb N.$}
	\end{align} 
	Hence,  all items of $u^{(n_k)}$ are contained in a $C'$ ball around the first element $x_k:=u^{(n_k)}_1 $ of $u^{(n_k)}$,  i.e.,
	\begin{align}    
	u^{(n_k)} \subset B(x_k,C') \quad \text{ for all } k \in \mathbb N.
	\end{align} 
	By Lemma~\ref{lem:MeansInBoundedSet} there is $C''>0$ such that all items of $\mathcal A(u^{(n_k)})$ lie in a $C''$ ball around $x_k,$ i.e.,
	\begin{align}    
	\mathcal A(u^{(n_k)}) \subset B(x_k,C'') \quad \text{ for all } k \in \mathbb N.
	\end{align} 
	Here,  we view $\mathcal A(u^{(n_k)})$ as the set of entries of the corresponding vector.
	On the other hand, by \eqref{eq:FisBounded4Subseq},
	\begin{align}
	\dist(\mathcal A(u^{(n_k)}),f)^p \leq C  \quad \text{ for all } k \in \mathbb N,
	\end{align}
	with the constant $C$ of \eqref{eq:FisBounded4Subseq}.
	This implies that there is a constant $C'''>0$ such that, for any item $\mathcal A(u^{(n_k)})_i$,
	\begin{align}
	 \mathcal A(u^{(n_k)})_i  \in  B(f_1, C''' )      \quad \text{ for all } k \in \mathbb N,  i \in \{1,\ldots,N\}.
	\end{align}
	In particular,
	\begin{align}
	 \mathcal A(u^{(n_k)})_1  \in  B(f_1, C''' ) \cap B(u^{(n_k)}_1,C'')    \quad \text{ for all } k \in \mathbb N.
	\end{align} 
	This guarantees that the right hand-sets are nonempty for any $k.$
	Hence, the sequence $u^{(n_k)}_1$ is bounded in $\mathcal M.$ 
	Hence, the sequence $u^{(n_k)}$ is bounded in $\mathcal M^K,$ since $\diam(u^{(n_k)})$ is bounded by \eqref{eq:diamBounded4allK}.
	This contradicts \eqref{eq:2contradict} according to which $\dist(u^{(n_k)},\sigma) \to \infty$ for the subsequence $u^{(n_k)}$ of $u^{(n)}$ which states that $u^{(n_k)}$ is unbounded.
	Hence, $F$ is coercive, which together with its lower semicontinuity guarantees the existence of minimizers which completes the proof.  
\end{proof}

\begin{theorem}\label{thm:ExistenceMinimizers}
	The inverse problem \eqref{eq:TichManiGeneral} for manifold-valued data with $\TV$ regularizer 
	and with the $V^q$ regularizer \eqref{eq:VqRegulizer} has a minimizer.
	The same statement  applies to the mixed first and second order regularizer \eqref{eq:mixedFirstSecond}
	with non vanishing first order term ($\mu_1 \neq 0$).
\end{theorem}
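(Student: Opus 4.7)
The natural plan is to invoke Theorem~\ref{thm:ExistenceCondR}, so I need to verify, for each of the three regularizers, the two hypotheses: lower semicontinuity of $R$, and the diameter coercivity property $R(u^{(n)}) \to \infty$ whenever $\diam(u^{(n)}) \to \infty$.

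For lower semicontinuity, the Riemannian distance is continuous on $\mathcal M \times \mathcal M$, so the $\TV$ functional \eqref{eq:DefTVreg} and the $V^q$ functional \eqref{eq:VqRegulizer} are each continuous sums of (powers of) distances, hence in particular lower semicontinuous. For the mixed regularizer \eqref{eq:mixedFirstSecond}, I need to address the $\TV_2$ term, which involves the (possibly non-unique) geodesic midpoint. The midpoint is a special case of the weighted Riemannian mean \eqref{eq:DefMatVecAnaManiIth} with weights $(\tfrac12,\tfrac12)$, so the exact argument of Lemma~\ref{lem:DatTermLSC} (with $f_i$ replaced by $u_i$) shows that $u \mapsto \dist(u_i, \midp(u_{i-1},u_{i+1}))$ is lower semicontinuous; summing gives lower semicontinuity of $\TV_2$, and the same goes for the diagonal terms $d_{1,1}$ in \eqref{eq:DefTV2}.

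For the diameter coercivity, I would exploit that the index set (a chain in 1D or a grid in higher dimensions) is connected by the edges appearing in the regularizer. If $\diam(u^{(n)}) \to \infty$, then for each $n$ there are indices $i^\star, j^\star$ with $\dist(u^{(n)}_{i^\star}, u^{(n)}_{j^\star}) \to \infty$, and a path of length $L$ (bounded by the number of vertices, a fixed constant) along grid edges connecting them. The triangle inequality gives $\dist(u^{(n)}_{i^\star}, u^{(n)}_{j^\star}) \le \sum_{\ell=1}^L \dist(u^{(n)}_{p_{\ell-1}}, u^{(n)}_{p_\ell}) \le \TV(u^{(n)})$, so $\TV(u^{(n)}) \to \infty$. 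For $V^q$ with $q \ge 1$, I apply the power-mean inequality $\sum_\ell d_\ell \le L^{1-1/q}(\sum_\ell d_\ell^q)^{1/q}$ to the same chain of edges, yielding $V^q(u^{(n)}) \ge L^{-(q-1)} \dist(u^{(n)}_{i^\star}, u^{(n)}_{j^\star})^q \to \infty$. For the mixed regularizer with $\mu_1 \ne 0$, I simply drop the non-negative $\TV_2$ term and reduce to the $\TV$ case.

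Putting these together, each regularizer satisfies both hypotheses of Theorem~\ref{thm:ExistenceCondR}, and the existence of a minimizer follows. I do not expect a substantial obstacle: the only slightly delicate point is the lower semicontinuity of $\TV_2$ under possible non-uniqueness of midpoints, but this is handled uniformly by reusing Lemma~\ref{lem:DatTermLSC}; the coercivity estimates are elementary consequences of connectedness of the underlying index graph together with the triangle and power-mean inequalities.
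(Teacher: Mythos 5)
Your proposal is correct and follows essentially the same route as the paper: both verify the two hypotheses of Theorem~\ref{thm:ExistenceCondR}, obtaining diameter coercivity of $\TV$ and $V^q$ from the triangle inequality along grid paths (the paper phrases the $V^q$ case as a contradiction argument equivalent to your power-mean bound) and reducing the mixed regularizer to the $\TV$ case by dropping the nonnegative $\TV_2$ term. Your treatment of the lower semicontinuity of $\TV_2$ by recycling the argument of Lemma~\ref{lem:DatTermLSC} for the midpoint (a two-point weighted mean) is exactly the technique the paper isolates as Lemma~\ref{lem:TV2islsc} in the appendix, so there is no substantive difference.
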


\begin{proof}
	We apply Theorem~\ref{thm:ExistenceCondR} for the mentioned regularizers.
	The $\TV$ regularizer \eqref{eq:DefTVreg} is continuous by the continuity of the Riemannian distance function. 
	Let $u^{(n)}$ be  a sequence such that  $\diam(u^{(n)}) \to \infty.$ Then, $\TV(u^{(n)}) \geq \diam(u^{(n)})$
	which implies $\TV(u^{(n)}) \to \infty.$ Hence, we may  apply Theorem~\ref{thm:ExistenceCondR} for the $\TV$ regularizer \eqref{eq:DefTVreg}.
	
	For the $V^q$ regularizer \eqref{eq:VqRegulizer} basically the same argument applies. Again, the continuity of the Riemannian distance function implies that
	the regularizer is a continuous function of the data.  
	Towards the other condition of Theorem~\ref{thm:ExistenceCondR}, 	
	let $u^{(n)}$ be  a sequence such that  $\diam(u^{(n)}) \to \infty,$ 
    and assume that there is a subsequence $u^{(n_k)}$ of $u^{(n)}$ such that $V^q(u^{(n_k)}) \leq C.$ Then,
    for this subsequence, $\diam(u^{(n_k)}) \leq C^{1/q} \cdot S$ for all $k,$ where $S$ is the sum of all dimensions of the considered image/data. This contradicts $\diam(u^{(n)}) \to \infty,$ 
    and therefore shows that  
    $V^q(u^{(n_k)}) \to \infty$ whenever $\diam(u^{(n)}) \to \infty.$
    Hence, we may  apply Theorem~\ref{thm:ExistenceCondR} for the $V^q$ regularizer \eqref{eq:VqRegulizer}.
	
	Finally, we consider the mixed first and second order regularizer \eqref{eq:mixedFirstSecond}
	with non vanishing first order term ($\mu_1 \neq 0$). By the argument for the $\TV$ regularizer above,
	$\diam(u^{(n)}) \to \infty$ implies $R(u^{(n)}) = \mu_1 \TV(u^{(n)}) + \mu_2 \TV_2(u^{(n)}) \to \infty$
	since $\mu_1 \neq 0.$ The lower semicontinuity of \eqref{eq:DefTGVreg} is a consequence of 
	the continuity of the $\TV$-term together with the lower semicontinuity of the $\TV_2$ term shown in
	Lemma~\ref{lem:TV2islsc}.
	Hence, we may  apply Theorem~\ref{thm:ExistenceCondR} for the mixed first and second order regularizer \eqref{eq:mixedFirstSecond}
	with $\mu_1 \neq 0.$
	Together, this shows the existence of minimizers for the considered regularizing terms and completes the proof.	
\end{proof}

For the previous result, and in the following, we need the lower semicontinuity of the $\TV_2$ regularizer which is stated in the next lemma.

\begin{lemma}\label{lem:TV2islsc}
	The second order $\TV_2$ regularizer, i.e., the regularizer in \eqref{eq:mixedFirstSecond}
	with $\mu_1 = 0 ,$ is lower semicontinuous.
\end{lemma}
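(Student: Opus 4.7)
The plan is to show that each individual second-order summand making up $\TV_2$ is lower semicontinuous; since a finite sum of lsc functions is lsc, this suffices. The only subtlety is that the midpoint $\midp(\cdot,\cdot)$ is in general set-valued (e.g.\ for cut-locus configurations on the sphere), so $d_2$ and $d_{1,1}$ are defined via a distance to a set of midpoints. I therefore need to track a selection of midpoints along a converging sequence and pass to a limit.

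First I would set up notation. Fix a convergent sequence $u^{(n)}\to u$ in $\mathcal M^K$, and consider a typical axis-aligned summand $d_2(u^{(n)}_{l-1,k},u^{(n)}_{l,k},u^{(n)}_{l+1,k})$. By the definition in the paper, this equals $\dist(u^{(n)}_{l,k}, \midp(u^{(n)}_{l-1,k},u^{(n)}_{l+1,k}))$, which I interpret as the minimum of $\dist(u^{(n)}_{l,k},m)$ over all geodesic midpoints $m$ of $u^{(n)}_{l-1,k}$ and $u^{(n)}_{l+1,k}$. Pick $m^{(n)}$ attaining this minimum; such a minimum exists because the midpoint set is closed (see below) and bounded. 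Since $u^{(n)}_{l\pm 1,k}$ converges, the pair is bounded, and every midpoint $m^{(n)}$ lies within distance $\tfrac12 \dist(u^{(n)}_{l-1,k},u^{(n)}_{l+1,k})$ of, say, $u^{(n)}_{l-1,k}$. Hence the $m^{(n)}$ lie in a common bounded (hence, by Hopf--Rinow applied to the complete manifold $\mathcal M$, relatively compact) subset of $\mathcal M$.

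Next I would extract a subsequence $m^{(n_j)}\to m^\ast$ realising $\liminf_n d_2(u^{(n)}_{l-1,k},u^{(n)}_{l,k},u^{(n)}_{l+1,k})$. The key observation is that the midpoint relation is closed: $m\in \midp(a,b)$ is equivalent to $\dist(a,m)=\dist(m,b)=\tfrac12\dist(a,b)$, a conjunction of three continuous equalities. Passing to the limit in these equalities with $a^{(n)}=u^{(n)}_{l-1,k}\to u_{l-1,k}$, $b^{(n)}=u^{(n)}_{l+1,k}\to u_{l+1,k}$, $m^{(n_j)}\to m^\ast$ gives $m^\ast\in \midp(u_{l-1,k},u_{l+1,k})$. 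Continuity of the Riemannian distance then yields
\begin{align*}
\liminf_n d_2(u^{(n)}_{l-1,k},u^{(n)}_{l,k},u^{(n)}_{l+1,k})
&= \lim_j \dist(u^{(n_j)}_{l,k}, m^{(n_j)})
= \dist(u_{l,k}, m^\ast)\\
&\geq d_2(u_{l-1,k},u_{l,k},u_{l+1,k}),
\end{align*}
where the last inequality uses that $d_2$ takes the minimum over all midpoints. This establishes lower semicontinuity of each axis-aligned second-order summand.

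The diagonal summand $d_{1,1}$ is handled by exactly the same argument applied twice, picking midpoints $m_1^{(n)}\in \midp(u^{(n)}_{l,k-1},u^{(n)}_{l-1,k})$ and $m_2^{(n)}\in \midp(u^{(n)}_{l-1,k-1},u^{(n)}_{l,k})$ that witness $d_{1,1}(\cdot)$; both lie in bounded sets, both admit convergent subsequences whose limits are again midpoints by the closedness of the midpoint relation, and continuity of $\dist$ passes the inequality to the limit. Summing the established lsc of all summands, $\TV_2$ is lower semicontinuous. The main obstacle throughout is the set-valuedness of $\midp$; the tool that resolves it is precisely the closedness of the midpoint relation on a complete manifold together with Hopf--Rinow, which together force any witnessing sequence of midpoints to cluster at an admissible midpoint of the limit configuration.
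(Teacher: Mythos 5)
Your proposal is correct and follows essentially the same route as the paper: select midpoints witnessing the (set-valued) $\midp$, use boundedness plus completeness (the paper cites its Lemma~\ref{lem:MeansInBoundedSet}, you use the direct bound $\tfrac12\dist(a,b)$ and Hopf--Rinow) to extract a convergent subsequence, observe that the limit is again a midpoint, and pass to the limit by continuity of $\dist$. The only cosmetic difference is that the paper proves lower semicontinuity of $d_{1,1}$ alone and deduces the case of $d_2$ via the identity $d_2(x,y,z)=d_{1,1}(x,y,y,z)$, whereas you treat $d_2$ first and note $d_{1,1}$ is the same argument applied twice.
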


To streamline the presentation, the proof of Lemma~\ref{lem:TV2islsc} is given in Appendix~\ref{sec:Appendix}. 

We note that Theorem~\ref{thm:ExistenceCondR} does not apply to the $\TGV$ regularizer \eqref{eq:DefTGVreg},
since, for a general imaging operator $\mathcal A,$ $\diam(u^{(n)}) \to \infty$ does not imply 
$\TGV_\lambda(u^{(n)}) \to \infty$
as the following univariate toy example shows.
\begin{example}
	let $\gamma$ be a unit speed geodesic im $\mathcal M$ and consider the univariate signals $u^{(n)}$ be given by   
	$u^{(n)}_j = \gamma (nj),$ $j=-1,0,1.$ Then, $\TGV_\lambda(u^{(n)}) = 0$ for all $n \in \mathbb N,$ but $\diam(u^{(n)}) \to \infty.$
	The same applies to second order $\TV_2$ regularizer \eqref{eq:mixedFirstSecond}
	with $\mu_1 = 0$, i.e., we have $\TV_2(u^{(n)}) = 0.$
	Further consider the imaging operator $\mathcal A$ given by the matrix $A= (1/2,0,1/2)$
	having only one row,
	and the measurement $f:=\gamma(0) \in \mathcal M.$
	Then, the functional in \eqref{eq:TichManiGeneral}  with $R = \TGV_2$ or $R=\TV_2$ and imaging operator $\mathcal A$
	is not coercive.
\end{example}
This example also shows that, in general, the functional in \eqref{eq:TichManiGeneral}  with $R = \TGV,$ the  
$\TGV$ regularizer of \eqref{eq:DefTGVreg}, is not coercive. 
The same statement also applies to the $\TV_2$ regularizer \eqref{eq:mixedFirstSecond}
with $\mu_1 = 0,$ i.e., the functional in \eqref{eq:TichManiGeneral} with $R=\TV_2$ is neither coercive nor does  
$\TV_2$ fulfill the condition  
$\TV_2 \to \infty$ as $\diam(u^{(n)}) \to \infty.$ 
To account for that, we give a variant of Theorem~\ref{thm:ExistenceCondR} which applies to these regularizers. 
This comes at the cost of additional constraints to $\mathcal A.$
 
\begin{theorem}\label{thm:ExistenceCondR4SecondOrd}
	Let $(l_0,r_0),$ $\ldots,$ $(l_S,r_S)$ be $S$ pairs of (a priori fixed) indices.
	We assume that $R$ is lower semicontinuous.
	We further assume that $R$ is a regularizing term such that, 
	for any sequences of signals $u^{(n)},$  
	the conditions $\diam(u^{(n)}) \to \infty$ and $\dist(u^{(n)}_{l_s},u^{(n)}_{r_s}) \leq C,$ for some $C>0$ and for all $n \in \mathbb N$ and all $s \in \{0,\ldots,S\},$
	imply that $R(u^{(n)}) \to \infty.$  
	If $\mathcal A$ is an imaging operator such that there is a constant $C'>0$ such that, for any signal $u$, $dist(u_{l_s},u_{r_s}) \leq C' \max (\diam (\mathcal A u),R(u)),$ for all $s \in \{0,\ldots,S\}$,
	 then the variational problem \eqref{eq:TichManiGeneral} with indirect measurement data term \eqref{eq:DefDist4Vectors} has a minimizer.
\end{theorem}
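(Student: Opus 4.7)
The plan is to mirror the structure of the proof of Theorem~\ref{thm:ExistenceCondR}: establish lower semicontinuity and then derive coercivity by contradiction, invoking the new hypothesis on $\mathcal A$ at precisely the step where the previous proof used $R(u^{(n)}) \to \infty$ whenever $\diam(u^{(n)}) \to \infty$.

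Lower semicontinuity of $F(u) = \dist(\mathcal A(u),f)^p + R(u)$ follows directly from Lemma~\ref{lem:DatTermLSC} together with the assumed lower semicontinuity of $R$, so only coercivity requires work. I would suppose, for contradiction, that there is $\sigma \in \mathcal M^K$ and a sequence $u^{(n)}$ with $\dist(u^{(n)}, \sigma) \to \infty$ along which $F(u^{(n)})$ does not tend to infinity. Passing to a subsequence, $F(u^{(n)}) \leq C$ for all $n$; hence $R(u^{(n)}) \leq C$ and $\dist(\mathcal A(u^{(n)})_i, f_i) \leq C^{1/p}$ for every $i$, the latter yielding via the triangle inequality a uniform bound on $\diam(\mathcal A(u^{(n)}))$.

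This is where the hypothesis on $\mathcal A$ enters. Since $\dist(u^{(n)}_{l_s}, u^{(n)}_{r_s}) \leq C' \max(\diam(\mathcal A u^{(n)}), R(u^{(n)}))$ and both arguments of the maximum are already bounded, every distance $\dist(u^{(n)}_{l_s}, u^{(n)}_{r_s})$ is uniformly bounded in $n$ and $s$. The assumption on $R$ now prohibits $\diam(u^{(n)}) \to \infty$, since that would force $R(u^{(n)}) \to \infty$ in contradiction to $R(u^{(n)}) \leq C$; after passing to a further subsequence (still denoted by $u^{(n)}$), we may therefore assume $\diam(u^{(n)}) \leq C''$ for a constant $C'' > 0$.

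It remains to bound a single coordinate of $u^{(n)}$ in $\mathcal M$, after which the diameter bound yields boundedness of the whole vector and thus contradicts $\dist(u^{(n)}, \sigma) \to \infty$. This step is essentially the conclusion of the proof of Theorem~\ref{thm:ExistenceCondR}: set $x_n := u^{(n)}_1$, so that $u^{(n)} \subset B(x_n, C'')$; Lemma~\ref{lem:MeansInBoundedSet} enlarges this to $\mathcal A(u^{(n)}) \subset B(x_n, C''')$ for a uniform $C''' > 0$; combining with $\mathcal A(u^{(n)})_1 \in B(f_1, C^{1/p})$ shows $B(x_n, C''') \cap B(f_1, C^{1/p}) \neq \emptyset$, which forces $x_n$ to lie in a fixed ball around $f_1$ and delivers the contradiction. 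The only nonroutine step I expect is the interlocking of the three bounds on $\diam(\mathcal A u^{(n)})$, $R(u^{(n)})$, and $\dist(u^{(n)}_{l_s}, u^{(n)}_{r_s})$ in the middle paragraph — this is where the quantitative hypothesis on $\mathcal A$ does exactly the work previously done by the diameter--coercivity of $\TV$ and $V^q$, and it is the only real conceptual difference from the proof of Theorem~\ref{thm:ExistenceCondR}.
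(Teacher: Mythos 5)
Your proposal is correct and follows essentially the same route as the paper's proof: lower semicontinuity from Lemma~\ref{lem:DatTermLSC} plus the assumption on $R$, then coercivity by contradiction, using the bound on $\dist(\mathcal A(u^{(n)}),f)^p$ and on $R(u^{(n)})$ to control $\dist(u^{(n)}_{l_s},u^{(n)}_{r_s})$ via the hypothesis on $\mathcal A$, invoking the strengthened coercivity assumption on $R$ to bound $\diam(u^{(n)})$ along a subsequence, and finishing exactly as in Theorem~\ref{thm:ExistenceCondR}. The only (immaterial) difference is that you establish the boundedness of the pair distances first and then apply the assumption on $R$, whereas the paper states the disjunction coming from the $R$-assumption first and then rules out the unbounded-pair-distance branch.
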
 
 
The proof of Theorem~\ref{thm:ExistenceCondR4SecondOrd} uses modifications of the techniques used to show Theorem~\ref{thm:ExistenceCondR} and is given in Appendix~\ref{sec:Appendix}.

We next apply Theorem~\ref{thm:ExistenceCondR4SecondOrd} to the (pure) second order $\TV_2$ regularizer.

\begin{theorem}\label{thm:ExistenceMinimizersSecondOrder}
	Consider the inverse problem \eqref{eq:TichManiGeneral} for manifold-valued data for the (pure) second order $\TV_2$ regularizer \eqref{eq:mixedFirstSecond} with $\mu_1 = 0$.
	For the univariate situation, we assume that the imaging operator $\mathcal A$ has the property that there 
	is an index $j_0$ and a constant $C>0$ such that, for any signal $u$, 
	\begin{align}\label{eq:AddAssumptionExUni}
		dist(u_{j_0},u_{j_0+1}) \leq C \max (\diam (\mathcal A u),\TV_2(u)).
	\end{align}
	For the bivariate situation, we assume that the imaging operator $\mathcal A$ has the property that there 
	are two indices $j_0=(x_0,y_0)$ and $j_1=(x_1,y_1)$  and a constant $C>0$ such that, for any signal $u$, 
	\begin{align}\label{eq:AddAssumptionExMulti}
		\max \left(\dist(u_{x_0,y_0},u_{x_0+1,y_0}),\dist(u_{x_1,y_1},u_{x_1,y_1+1})\right) \leq C \max (\diam (\mathcal A u),\TV_2(u)),\quad  
	\end{align}
	Then, the inverse problem \eqref{eq:TichManiGeneral} for manifold-valued data with the second order $\TV_2$ regularizer given by \eqref{eq:mixedFirstSecond}	for $\mu_1 = 0$ has a minimizer.
\end{theorem}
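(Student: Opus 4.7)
The plan is to reduce the statement to an application of Theorem~\ref{thm:ExistenceCondR4SecondOrd} with $R = \TV_2$, choosing as index pairs $(l_s,r_s)$ exactly those appearing in the hypotheses: in the univariate case, the single pair $(j_0,j_0+1)$; in the bivariate case, the two pairs $((x_0,y_0),(x_0+1,y_0))$ and $((x_1,y_1),(x_1,y_1+1))$. With this choice, the assumption Theorem~\ref{thm:ExistenceCondR4SecondOrd} places on $\mathcal A$ is literally \eqref{eq:AddAssumptionExUni} in the univariate case and \eqref{eq:AddAssumptionExMulti} in the bivariate case, while lower semicontinuity of $R = \TV_2$ is Lemma~\ref{lem:TV2islsc}. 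The substantive step is to verify the coercivity-type condition on $R$: if $\dist(u^{(n)}_{l_s},u^{(n)}_{r_s}) \leq C$ for all $n$ and $s$, then $\diam(u^{(n)}) \to \infty$ forces $\TV_2(u^{(n)}) \to \infty$. Equivalently, bounded $\TV_2$ together with the bounded anchor distances must force bounded diameter.

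For the univariate case I would start from the elementary midpoint estimate $\dist(\midp(u_{i-1},u_{i+1}),u_{i+1}) \leq \tfrac12 \dist(u_{i-1},u_{i+1})$ (valid in any geodesic space), combine it with the triangle inequality, and rearrange to obtain
\begin{equation}\label{eq:inductiveDistBoundProposal}
\dist(u_i,u_{i+1}) \leq 2\, d_2(u_{i-1},u_i,u_{i+1}) + \dist(u_{i-1},u_i).
\end{equation}
Iterating \eqref{eq:inductiveDistBoundProposal} forward and backward from the anchor $\dist(u_{j_0},u_{j_0+1}) \leq C$ bounds each consecutive distance by $C + 2\,\TV_2(u)$, hence the diameter by $K \cdot (C + 2\,\TV_2(u))$.

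In the bivariate case, the univariate bound \eqref{eq:inductiveDistBoundProposal} applied along row $y_0$ (using the horizontal second-difference contributions to $\TV_2$) and along column $x_1$ (using the vertical ones) shows that every horizontal consecutive distance in row $y_0$ and every vertical consecutive distance in column $x_1$ is at most $C + 2\,\TV_2(u)$. To transport these bounds off the distinguished cross, I would exploit the diagonal second-order terms $d_{1,1}$ appearing in \eqref{eq:DefTV2}: a triangle-inequality calculation anchored at the two midpoints $\midp(u_{l,k-1},u_{l-1,k})$ and $\midp(u_{l-1,k-1},u_{l,k})$ shows that $d_{1,1}(u_{l-1,k-1},u_{l,k-1},u_{l-1,k},u_{l,k})$ controls the deviation between the horizontal distance $\dist(u_{l-1,k},u_{l,k})$ and the horizontal distance $\dist(u_{l-1,k-1},u_{l,k-1})$ in the adjacent row (and symmetrically for vertical distances across adjacent columns). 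Iterating this estimate perpendicular to the distinguished cross propagates the horizontal and vertical bounds from row $y_0$ and column $x_1$ to every row and every column, yielding a bound on $\diam(u)$ of the form $C_1 \cdot (C + \TV_2(u))$ with $C_1$ depending only on the image dimensions.

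The main obstacle is precisely this last bivariate propagation step. In Euclidean space the mixed difference $d_{1,1}$ is, up to a factor, the absolute difference between the two relevant horizontal (or vertical) displacement vectors, so the propagation inequality is immediate. In a general Riemannian manifold no such linear identity is available, and the inequality has to be extracted by careful triangle-inequality bookkeeping around the two midpoints defining $d_{1,1}$, combined with repeated uses of $\dist(\midp(a,b),a) \leq \tfrac12 \dist(a,b)$. Once this propagation estimate is in hand the coercivity-type condition on $\TV_2$ is established, and Theorem~\ref{thm:ExistenceCondR4SecondOrd} delivers the existence of minimizers.
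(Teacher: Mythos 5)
Your overall strategy coincides with the paper's: reduce to Theorem~\ref{thm:ExistenceCondR4SecondOrd} with the index pairs taken from the hypotheses, invoke Lemma~\ref{lem:TV2islsc} for lower semicontinuity, and verify the remaining condition by showing that a bounded $\TV_2$ value together with the bounded anchor distances forces a bounded diameter. Your univariate step, the inequality $\dist(u_i,u_{i+1}) \leq 2\,d_2(u_{i-1},u_i,u_{i+1}) + \dist(u_{i-1},u_i)$ iterated from the anchor, is exactly the paper's propagation along the distinguished row and column (the ``cross'').

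The gap is in the bivariate off-cross step. You assert that triangle-inequality bookkeeping shows that $d_{1,1}$ alone controls the deviation between $\dist(u_{l-1,k},u_{l,k})$ and $\dist(u_{l-1,k-1},u_{l,k-1})$, and you then iterate this perpendicular to the cross. That estimate is really a vector-space identity: in $\mathbb{R}^n$ one has $2\,d_{1,1} = \|(u_{l,k-1}-u_{l-1,k-1})-(u_{l,k}-u_{l-1,k})\|$ and the reverse triangle inequality for norms of differences of vectors finishes the job. In a general geodesic space, triangle inequalities around the two midpoints only yield
\begin{align*}
\dist(u_{l-1,k},u_{l,k}) \leq \tfrac12\dist(u_{l,k-1},u_{l-1,k}) + d_{1,1} + \tfrac12\dist(u_{l-1,k-1},u_{l,k}),
\end{align*}
and each of the two diagonals can only be bounded by a horizontal side of the cell plus a \emph{vertical} one. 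Hence the horizontal bound in row $k-1$ does not transfer to row $k$ via $d_{1,1}$ alone; you also need the verticals $\dist(u_{l-1,k-1},u_{l-1,k})$ and $\dist(u_{l,k-1},u_{l,k})$, which off the cross are not yet known, so the perpendicular iteration stalls at its first step. The repair --- essentially what the paper does by first bounding a diagonal such as $\dist(v_{x_0,y_1},v_{x_0+1,y_1+1})$ --- is to interleave the two directions: within the strip between rows $y_0$ and $y_0+1$, propagate the vertical distances horizontally away from column $x_1$ using $d_{1,1}$ together with the already-known row-$y_0$ horizontals and the column-$x_1$ vertical (the same midpoint computation gives, for a cell with corners $a,b$ in row $y_0$ and $c,d$ above them, $\dist(b,d) \leq 2\dist(a,b)+\dist(a,c)+2\,d_{1,1}$); once all four sides of every cell in the strip are bounded, the row-$(y_0+1)$ horizontals follow from the plain triangle inequality, and one repeats row by row and column by column. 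With this reordering your argument closes and matches the paper's proof.
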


The proof of Theorem~\ref{thm:ExistenceMinimizersSecondOrder} is given in Appendix~\ref{sec:Appendix}.

\begin{theorem}\label{thm:ExistenceMinimizersTGV}
	Consider the inverse problem \eqref{eq:TichManiGeneral} for manifold-valued data for the $\TGV$ regularizer \eqref{eq:DefTGVreg}.
	For the univariate situation, we assume that the imaging operator $\mathcal A$ has the property that there 
	is an index $j_0$ and a constant $C>0$ such that, for any signal $u$, \eqref{eq:AddAssumptionExUni} is fulfilled with $\TV_2$ replaced by $\TGV_\lambda$.
	For the bivariate situation, we assume that the imaging operator $\mathcal A$ has the property that there 
	are two indices $j_0=(x_0,y_0)$ and $j_1=(x_1,y_1)$  and a constant $C>0$ such that, for any signal $u$, 
	\eqref{eq:AddAssumptionExMulti} is fulfilled with $\TV_2$ replaced by $\TGV_\lambda$, and that in addition, there is a further index $j_2=(x_2,y_2)$
	with $|y_2-y_0|=1$ (neighboring lines) such that 
	\begin{align}\label{eq:4fromCrossToAll}
	\dist(u_{x_2,y_2},u_{x_2+1,y_2}) \leq C \max (\diam (\mathcal A u),\TGV_\lambda(u)). 
	\end{align}
	(In the latter condition the index $j_2$ can be replaced by 
    $j_2=(x_2,y_2)$ with $|x_2-x_1|=1,$ and 
	$
		\dist(u_{x_2,y_2},u_{x_2,y_2+1}) \leq C \max (\diam (\mathcal A u),\TGV_\lambda(u))
	$.)
	Then, the inverse problem \eqref{eq:TichManiGeneral} for manifold-valued data with $\TGV$ regularizer \eqref{eq:DefTGVreg} has a minimizer.
\end{theorem}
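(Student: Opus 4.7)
The plan is to apply Theorem~\ref{thm:ExistenceCondR4SecondOrd} with $R = \TGV_\lambda$, choosing the pairs $(l_s,r_s)$ to be exactly the index pairs appearing in the assumptions \eqref{eq:AddAssumptionExUni} (univariate case) or \eqref{eq:AddAssumptionExMulti} together with \eqref{eq:4fromCrossToAll} (bivariate case). The hypothesis on $\mathcal A$ directly supplies the bound $\dist(u_{l_s},u_{r_s}) \leq C' \max(\diam(\mathcal A u),\TGV_\lambda(u))$ required in Theorem~\ref{thm:ExistenceCondR4SecondOrd}, so the central task is to verify the two abstract conditions on the regularizer: lower semicontinuity of $\TGV_\lambda$, and the implication that, along any sequence $u^{(n)}$ with $\diam(u^{(n)})\to\infty$ and uniformly bounded values $\dist(u^{(n)}_{l_s},u^{(n)}_{r_s})$, one necessarily has $\TGV_\lambda(u^{(n)})\to\infty$. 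The lower semicontinuity can be taken from \cite{bredies2017total} (it follows from continuity of the distance/midpoint/parallel transport maps and a standard argument allowing us to extract a convergent subsequence of the auxiliary variables $v^1,v^2$, analogous to Lemma~\ref{lem:TV2islsc}).

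For the coercivity-type condition, I would argue by contrapositive: suppose $\TGV_\lambda(u^{(n)}) \leq C$ and the finitely many distances $\dist(u^{(n)}_{l_s},u^{(n)}_{r_s})$ are uniformly bounded; I will show $\diam(u^{(n)})$ is bounded. From $\TGV_\lambda(u^{(n)}) \leq C$ there exist auxiliary vector-field variables $v^{(n)}$ (resp.\ $v^{1,(n)},v^{2,(n)}$) such that the two TGV summands are each $\leq C$. The first-order term then provides $\dist(u^{(n)}_{i+1},v^{(n)}_i) \leq C/\lambda_1$ (and its bivariate analogue for $v^{1,(n)},v^{2,(n)}$), so controlling one consecutive distance $\dist(u^{(n)}_{j_0},u^{(n)}_{j_0+1})$ yields, by the triangle inequality, a bound on $\|\log_{u^{(n)}_{j_0}} v^{(n)}_{j_0}\|$. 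The second TGV summand (whether instantiated via parallel transport or Schild's ladder) then bounds the change of these tangent-vector representatives from index to index, so by iteratively applying the triangle inequality on the Riemannian norm we obtain a uniform bound on $\|\log_{u^{(n)}_i} v^{(n)}_i\|$ for every $i$, and hence on every $\dist(u^{(n)}_i,u^{(n)}_{i+1})$, contradicting $\diam(u^{(n)})\to\infty$.

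In the bivariate situation the argument is the same in spirit but must be carried out on the grid. The assumptions \eqref{eq:AddAssumptionExMulti} control one horizontal and one vertical consecutive distance at base points $(x_0,y_0)$ and $(x_1,y_1)$, which by the first TGV summand gives a starting bound on $\|\log_{u}v^1\|$ at $(x_0,y_0)$ and on $\|\log_u v^2\|$ at $(x_1,y_1)$. The third and fourth TGV summands then propagate these bounds along the row $y=y_0$ for $v^1$ and along the column $x=x_1$ for $v^2$; this yields bounds on all consecutive horizontal distances in row $y_0$ and all consecutive vertical distances in column $x_1$, i.e.\ on a whole ``cross''. The extra assumption \eqref{eq:4fromCrossToAll} together with the symmetrized gradient term $D^{\text{sym}}$ is used to transfer the bound from row $y_0$ to the neighboring row $y_2$ and, inductively, to every row: $D^{\text{sym}}$ couples the increments of $v^1$ in the $y$-direction with those of $v^2$ in the $x$-direction, so once $v^1$ is controlled on two adjacent rows and $v^2$ is controlled on one column, the bounds propagate to the entire grid, yielding bounded diameter.

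The main obstacle I anticipate is precisely this last propagation step in the bivariate case: one must verify, from a careful reading of the symmetrized discrete gradient $D^{\text{sym}}$ in \cite{bredies2017total} (either in its parallel-transport or Schild's-ladder form), that a starting control of $v^1,v^2$ on the cross described above is sufficient to bootstrap a uniform norm bound throughout the whole image, possibly with constants that grow polynomially in the grid size but are independent of $n$. Once this combinatorial/geometric lemma is in place, the remainder of the proof reduces to an application of Theorem~\ref{thm:ExistenceCondR4SecondOrd} exactly as in Theorem~\ref{thm:ExistenceMinimizersSecondOrder}.
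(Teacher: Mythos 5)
Your overall strategy coincides with the paper's: apply Theorem~\ref{thm:ExistenceCondR4SecondOrd} with the pairs $(l_s,r_s)$ given by the hypothesized index pairs, take lower semicontinuity of $\TGV_\lambda$ from \cite{bredies2017total}, and show by contradiction that bounded $\TGV_\lambda$ plus bounded anchor distances forces bounded diameter, propagating bounds first along a ``cross''. The univariate argument and the construction of the cross in the bivariate case are essentially the paper's. But the step you yourself flag as the ``main obstacle'' --- getting from the cross to the whole grid --- is a genuine gap as written: you propose to close it by analyzing how $D^{\text{sym}}$ couples the $y$-increments of $v^1$ with the $x$-increments of $v^2$, which you do not carry out and which would require unpacking the precise form of the symmetrized gradient.

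The paper's resolution is more elementary and does not use $D^{\text{sym}}$ at all. The extra hypothesis \eqref{eq:4fromCrossToAll} gives a bounded horizontal consecutive difference in the \emph{adjacent} row $y_2$; running the same first-line/second-line propagation there bounds all horizontal consecutive differences along the entire row $y_2$, so you now control two full adjacent rows plus the column $x_1$. The triangle inequality (chaining $\dist(u_{x+1,y_0},u_{x,y_0})$, $\dist(u_{x,y_0},u_{x,y_2})$, $\dist(u_{x,y_2},u_{x+1,y_2})$, starting from the bounded vertical difference at $x_1$) then bounds $\dist(u_{x,y_0},u_{x,y_2})$ for every $x$. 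Since $|y_2-y_0|=1$, this is a bounded consecutive \emph{vertical} difference at the base of every column, which seeds the ordinary $v^2$-propagation up each column and bounds all vertical consecutive differences; a final triangle inequality then bounds all horizontal ones, giving bounded diameter. You should replace your $D^{\text{sym}}$-based bootstrap with this argument (or actually prove the coupling lemma you describe); as it stands the bivariate case is not established.
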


The proof of Theorem~\ref{thm:ExistenceMinimizersTGV} is given in Appendix~\ref{sec:Appendix}.

\begin{cor}\label{cor:ExistenceMinimizersSecondOrderCompact}
	Let $\mathcal M$ be a compact manifold. 
	The inverse problem \eqref{eq:TichManiGeneral} for data living in $\mathcal M$ with $\TGV$ regularizer \eqref{eq:DefTGVreg} has a minimizer.
	The same statement applies to the (pure) second order $\TV_2$ regularizer \eqref{eq:mixedFirstSecond}
	with $\mu_1 = 0$.
\end{cor}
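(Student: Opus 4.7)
The plan is to bypass the coercivity difficulties of Theorems~\ref{thm:ExistenceMinimizersSecondOrder} and~\ref{thm:ExistenceMinimizersTGV} by observing that, when $\mathcal M$ is compact, the entire configuration space $\mathcal M^K$ is compact, so we only need lower semicontinuity of the functional together with the direct method. There is no need to establish coercivity or to impose extra structural conditions on $\mathcal A$, because every sequence in $\mathcal M^K$ automatically has a convergent subsequence.

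First I would note that $\mathcal M$ compact implies $\mathcal M^K$ compact in the product topology induced by the Riemannian distance. The data term $u \mapsto \dist(\mathcal A(u),f)^p$ is lower semicontinuous by Lemma~\ref{lem:DatTermLSC} (which does not use any compactness assumption). Hence the entire functional in~\eqref{eq:TichManiGeneral} will be lower semicontinuous as soon as the regularizer is, and then the classical Weierstrass argument yields existence of minimizers on the compact space $\mathcal M^K$.

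For the pure $\TV_2$ regularizer this is immediate: Lemma~\ref{lem:TV2islsc} gives lower semicontinuity of $\TV_2$, so the sum with the lsc data term is lsc on the compact $\mathcal M^K$ and attains its minimum. For the $\TGV_\lambda$ regularizer the step that requires a small argument is lower semicontinuity of
\begin{equation*}
\TGV_\lambda(u) \;=\; \inf_{v^1,v^2 \in \mathcal M^K} \Phi(u,v^1,v^2),
\end{equation*}
where $\Phi$ denotes the sum on the right-hand side of~\eqref{eq:DefTGVreg}. Here $\Phi$ is built from Riemannian distances together with midpoints (or the parallel transport / Schild variant), all of which are continuous in their arguments on the compact manifold $\mathcal M$. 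Hence $\Phi$ is continuous in all three variables. Since $\mathcal M$ is compact, the infimum over $(v^1,v^2) \in \mathcal M^K \times \mathcal M^K$ is actually a minimum, and a standard argument (taking a sequence $u^{(n)} \to u$, selecting corresponding minimizers $(v^{1,(n)}, v^{2,(n)})$, extracting a convergent subsequence by compactness and passing to the limit using continuity of $\Phi$) shows that $u \mapsto \TGV_\lambda(u)$ is lower semicontinuous.

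Combining the lower semicontinuity of the data term with that of $\TGV_\lambda$ (respectively $\TV_2$), the functional in~\eqref{eq:TichManiGeneral} is lower semicontinuous on the compact space $\mathcal M^K$, and therefore attains its minimum. The main conceptual point is that compactness of $\mathcal M$ eliminates the need for the auxiliary conditions~\eqref{eq:AddAssumptionExUni}, \eqref{eq:AddAssumptionExMulti}, \eqref{eq:4fromCrossToAll} imposed on $\mathcal A$ in the noncompact case; the only nontrivial verification is the lower semicontinuity of $\TGV_\lambda$ as an infimum over auxiliary variables, which is the step I would expect to write out in most detail.
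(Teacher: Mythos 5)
Your proof is correct and follows essentially the same route as the paper: the paper also observes that compactness of $\mathcal M$ makes the coercivity (equivalently, the auxiliary conditions on $\mathcal A$ and the diameter condition on $R$) trivial, so that only lower semicontinuity of the data term (Lemma~\ref{lem:DatTermLSC}) and of the regularizer remains, the latter being supplied by Lemma~\ref{lem:TV2islsc} for $\TV_2$ and by the lower semicontinuity of $\TGV_\lambda$ established in the cited prior work. One small caution in your sketch of the $\TGV_\lambda$ step: the midpoint and shortest-geodesic parallel-transport maps need not be single-valued or continuous on a general compact manifold (e.g.\ antipodal points on a sphere), so $\Phi$ is not literally continuous; the argument still goes through by treating these objects as set-valued and extracting convergent choices as in the proof of Lemma~\ref{lem:TV2islsc}, which yields lower semicontinuity rather than continuity.
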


\begin{proof}
	Noting that both regularizers are lower semicontinuous (cf. the proofs of  Theorem~\ref{thm:ExistenceMinimizersSecondOrder} and 
	 Theorem~\ref{thm:ExistenceMinimizersTGV}),
	the statement is an immediate consequence of Theorem~\ref{thm:ExistenceCondR4SecondOrd} by noting that the second condition 
	on $R$ is trivially fulfilled since  $\mathcal M$ is bounded, or directly by the fact that the compactness of  
    $\mathcal M$ implies the coercivity of $R$. 
\end{proof}

\section{Algorithms}\label{sec:algorithm}

In the following we derive algorithms for the manifold valued analogue \eqref{eq:TichManiGeneral} of Tichanov-Phillips regularization. 
We consider extensions of generalized forward-backward schemes for data terms in \eqref{eq:TichManiGeneral} with $p>1.$ 
More precisely, we propose a variant based on a trajectory method together with a Gau\ss-Seidel type 
update strategy as well as a stochastic variant of the generalized forward-backward scheme.
Further, we consider proximal point algorithms which may also be used in the case $p=1.$ 
In this paper, we explicitly consider regularizers which we can decompose into basic atoms for which we in turn can compute their proximal mappings. Examples of such regularizers are the ones discussed in Section~\ref{sec:SpecificRegularizers}, e.g., $\TV$ and $\TGV$ regularizers.

\subsection{Basic Algorithmic Structures}\label{sec:Basic_Algorithmic_Structures}

\paragraph{Basic Generalized Forward Backward Scheme.} 
In \cite{baust2016combined}, we have proposed a generalized forward backward algorithm for DTI data with a voxel-wise indirect data term. We briefly recall the basic idea and then continue with a further developement of this scheme. 

We denote the functional in \eqref{eq:DefTivchManiTV} by $\mathcal F$ and decompose it into the data term $\mathcal D$ and the regularizer $R$ which we further decompose into atoms $R_k,$ i.e.,
\begin{align}
\mathcal F(u) = \mathcal D (u) +  \lambda \ R(u) =  \mathcal D (u) +  \lambda \sum\nolimits_{k=1}^{K'}  R_k(u)
\end{align}
with 
\begin{align}\label{eq:DataTermAsD}
  \mathcal D (u)   = \dist(\mathcal A(u),f)^p. 	
\end{align}
The basic idea of a generalized forward-backward scheme is to perform a gradient step for the explicit term, here $\mathcal D$,  as well as proximal mapping step for each atom of the implicit term, here $R_i.$ 
Here, the  proximal mapping \cite{moreau1962fonctions, ferreira2002proximal, azagra2005proximal} of a function $g$ on a manifold  $\mathcal{M}'$ is given by
\begin{align} \label{eq:prox_mapping_abstract}
\prox_{\mu g} x = \argmin_y g(y) + \tfrac{1}{2 \mu} \dist(x,y)^2, \qquad \mu > 0.  
\end{align}
For general manifolds, the proximal mappings \eqref{eq:prox_mapping_abstract} are not globally defined, and the minimizers are not unique, at least for general possibly far apart points; cf. \cite{ferreira2002proximal, azagra2005proximal}.
This is a general issue in the context of manifolds that are -- in a certain sense -- a local concept
involving objects that are often only locally well defined. In case of ambiguities, we hence consider the above objects as set-valued quantities. Furthermore, often the considered functionals are not convex; hence the convergence to a globally optimal solution cannot be ensured. Nevertheless, as will be seen in the numerical experiments section, we experience a good convergence behavior in practice.
This was also observed in previous works such as  \cite{bergmann2014second,bavcak2016second} where the involved manifold valued functionals are not convex either.

Concerning the gradient of the data term \eqref{eq:DataTermAsD}, we write 
\begin{align}\label{eq:DataTermAsDi}
\mathcal D (u)   = \sum\nolimits_{i=1}^N   \mathcal D_i (u),  \quad \text{ with }\quad   \mathcal D_i (u):=   \dist(\mathcal A(u)_i,f_i)^p 	
\end{align}
with $ p \in (1,\infty).$
Then, we have, for the gradient of $\mathcal D$ w.r.t.\ the variable $u_l,$
\begin{align}
	\nabla_{u_l} \mathcal D (u) =    \sum\nolimits_{i=1}^N  \nabla_{u_l} \mathcal D_i (u).
\end{align}  
The gradient of $\mathcal D_i$ w.r.t.\ $u_l$ is given in Theorem~\ref{thm:ComputeDerivativeOfM}.
Then the overall algorithm reads 
\begin{align}\notag
 \text{ Iterate} & \text{ w.r.t.\ $n$ :} \\
 1. &\text{ Compute } u^{(n+0.5)} =  \exp_{u^{(n)}} \left(-\mu_n \nabla_{u} \mathcal D (u^{(n)})\right),  \qquad \quad \text{ for all $l$ }; \label{eq:algFBclassic} \\  
 2.  &\text{ Compute } u^{(n+0.5+ k/2{K'})} =  \prox_{\mu_n \lambda R_k} u^{(n+0.5+ (k-1)/2{K'})}, \qquad  \text{ for all $k.$} \quad \quad \notag 
\end{align}  
Note that, for the explicit gradient descend part, we use the $k$th iterate for all $l$ updates which corresponds to the Jacobi type update scheme.  
During the iteration, the positive parameter $\mu_n$ is decreased in a way such that $\sum_n \mu_n = \infty$ and  
such that $\sum_n \mu_n^2 < \infty.$ 
We note that for the regularizers $R$ of Section~\ref{sec:SpecificRegularizers} 
the step 2 in \eqref{eq:algFBclassic} can be massively parallelized as explained in the corresponding papers.

One issue here is the parameter choice for the gradient descent which is a common problem when dealing with gradient descent schemes.
Often some step size control, e.g., using line search techniques are employed. 
In our situation, two particular issues arise: 
(i) One particular $D_i$ may cause a low step size whereas the other $D_j$ allow for much larger steps.
Then we have to employ the small step size for all $D_j$ as well. 
(ii) In order not to touch on the balancing between the data and the regularizing term, the resulting step size parameter 
also influences the proximal mapping (realized via $\mu_n$ in \eqref{eq:algFBclassic} above.) In particular, a small step size 
in the gradient step further implies a small step size in the proximal step.
Summing up, a small step size within an atom of the data term results in a small step size for the whole loop of the iteration.

\paragraph{A Generalized Forward Backward Scheme using a Gau\ss-Seidel Type Gradient Update and a Trajectory Method.}  In order to overcome the step size issue discussed at the end of the previous paragraph, we propose to employ a Gauss-Seidel type update scheme.  More precisely, we consider the algorithm
\begin{align}\notag
\text{ Iterate} & \text{ w.r.t.\ $n$ :} \\
1. &\text{ Compute } u^{(n+i/2N)} =  
\exp_{u^{(n + (i-1)/2N)}} \left(-\mu_n \nabla_{u} \mathcal D_i (u^{(n + (i-1)/2N)})\right),  
\qquad  \text{ for all $i$ }; \label{eq:algFBwithGSupdate} \\  
2.  &\text{ Compute } u^{(n+0.5+ k/2{K'})} =  \prox_{\mu_n \lambda R_k} u^{(n+0.5+ (k-1)/2{K'})}, \qquad  \text{ for all $k.$} \qquad \qquad \notag 
\end{align}  
This has the advantage that the computation of the gradients can be performed atom wise for each $\mathcal D_i$.
In particular, if we face a small step size for a particular $\mathcal D_{i'},$ we can overcome the problem that we have to decrease both the step size of the other $\mathcal D_i$'s and the step size for the proximal mappings of the atoms $R_i$:
instead of decreasing the step size, we may use a 
{\em trajectory method}.	
Inspired by solving initial value problems for ODEs, we propose to do the following to get   
$x = u^{(n+i/2N)}$ given $x_0 = u^{(n + (i-1)/2N)}$:   
\begin{align}\notag
\text{Iterate w}&\text{.r.t.\ $r$ until $\tau \geq 1$ :} \\
  & x_r :=   \exp_{x_{r-1}} \left(- \tau_{r-1}  \mu_n \nabla \mathcal D_i(x_{r-1})  \label{eq:TrajMeth} \right)\\
  & \tau :=  \sum\nolimits_{l=0}^{r-1} \tau_{l} \notag \\
 x=   \exp &_{x_{r-1}}  \left(  \left(1 - \sum\nolimits_{l=0}^{r-2} \tau_{l}\right)  \mu_n \nabla \mathcal D_i(x_{r-1}) \right) \notag
 \end{align}  
Here, $\tau_{r-1}$ is a predicted step size for the gradient step at $x_{r-1}.$
This means instead of using a straight line we follow a polygonal path normalized by evaluating it at ``time'' $\tau=1$.
Replacing the number one by a number smaller than one would result in a damping factor w.r.t.~the explicit part of the functional. 
Note that this approach is not possible for the classical Jacobi-type update scheme.
Summing up, we propose the algorithm
\begin{align}\notag
\text{ Iterate} & \text{ w.r.t.\ $n$ :} \\
1. &\text{ Compute } u^{(n+i/2N)} =  \traj \mathcal D_i \left(u^{(n + (i-1)/2N)}\right) \qquad  \text{ for all $i$ }; \label{eq:algFBwithGSupdateAndTraj} \\  
2.  &\text{ Compute } u^{(n+0.5+ k/2{K'})} =  \prox_{\mu_n \lambda R_k} u^{(n+0.5+ (k-1)/2{K'})}, \qquad  \text{ for all $k.$} \qquad \qquad \notag 
\end{align}  
where $\traj \mathcal D_i(\cdot)$ denotes the application of the trajectory method defined by \eqref{eq:TrajMeth}.
We note that the algorithm \eqref{eq:algFBwithGSupdateAndTraj} is in many situations parallelizable with minor modifications.
For instance if $\mathcal A$ denotes the manifold analogue of a convolution operator with finite support. If the supports of the corresponding masks at two different spatial points do not overlap, the respective  $\traj \mathcal D_i$ may be computed in parallel. The minor modification in the algorithm then consists of another order of applying the operation $\traj \mathcal D_i.$

\paragraph{A Stochastic Generalized Forward Backward Scheme.}

The above algorithm given by \eqref{eq:algFBwithGSupdateAndTraj} is in a certain sense related to stochastic gradient descent schemes
(which are presently very popular for optimizing neural networks for machine learning tasks).
The update of the gradients of the mappings $\mathcal D_i$
using a Gauss-Seidel scheme type scheme may be seen as some steps of a stochastic gradient descent (with prescribed order, however).
We propose to chose the order randomly also including the proximal mapping atoms. This results in the following algorithm:
\begin{align}\notag
\text{ Iterate} & \text{ w.r.t.\ $n$ :} \\
1. & \text{ choose a random permutation $\rho$ of $\{1, 2, \ldots, N+K'\}$} \notag \\
2. & \text{ for }i = 1, 2 \ldots, N + K'  \notag\\
&\qquad\text{ if $\rho(i) \leq N:$} \qquad\text{ compute } u^{(n+i/(N+K'))} =  \traj \mathcal D_{\rho(i)} \left(u^{(n+(i-1)/(N+K'))}\right),  \label{eq:algFBwithGSupdateAndTrajRandom}  \\  
  &\qquad\text{ else:} \qquad\quad\qquad\text{ compute } u^{(n+i/(N+K'))} =  \prox_{\mu_n \lambda R_{\rho(i)-N}} u^{(n+(i-1)/(N+K'))}.\notag 
\end{align}  
A further variant of the proposed scheme would be to consider larger selections of atoms at a step.

\paragraph{The Cyclic Proximal Point Scheme.}

A reference for cyclic proximal point algorithms in vector spaces is \cite{Bertsekas2011in}.
In the context of Hadamard spaces, the concept of CPPAs was developed by \cite{bavcak2013computing}, where it is used to compute means and medians.
In the context of variational regularization methods for nonlinear, manifold-valued data,
they were first used in \cite{weinmann2014total} and then later in various variants in 
\cite{bergmann2014second,bavcak2016second,bredies2017total}. 

We briefly explain the basic principle. The idea of CPPAs is to compose the target functional into 
basic atoms and then to compute the proximal mappings of each of the atoms in a cyclic, iterative way.
In the notation used above, we have the algorithm
\begin{align}\notag
\text{ Iterate} & \text{ w.r.t.\ $n$ :} \\
1. &\text{ Compute } u^{(n+i/2N)} =  \prox_{\mu_n \mathcal D_i} u^{(n + (i-1)/2N)}\qquad  \text{ for all $i$ }; \label{eq:algCPPA} \\  
2.  &\text{ Compute } u^{(n+0.5+ k/2K')} =  \prox_{\mu_n \lambda R_k} u^{(n+0.5+ (k-1)/2K')}, \qquad  \text{ for all $k.$} \qquad \qquad \notag 
\end{align}  
We note that we in particular use \eqref{eq:algCPPA} when the exponent $p$ of the power of the distance in the data term equals $one.$ As above, the parameters $\mu_n$ are chosen such that $\sum_n \mu_n = \infty$ and such that $\sum_n \mu_n^2 < \infty.$

\subsection{Derivation of the Gradients and the Proximal Mappings for the Data Terms with Indirect Measurements.}

We here derive the gradients of the data terms $\mathcal D$ of the form \eqref{eq:DataTermAsD} which incorporate indirect measurement terms in the manifold setting
and its atoms $\mathcal D_i,$ respectively.
We further derive the proximal mappings of the atoms $\mathcal D_i.$ 

\paragraph{Gradients for the Data Term with Indirect Measurements.}

Since the gradient of $\mathcal D$ given by \eqref{eq:DataTermAsD}
equals the sum of the gradients of its atoms $\mathcal D_i,$
i.e.,
\begin{align}
\nabla \mathcal D(u)  = \sum\nolimits_i \nabla \mathcal D_i(u),
\end{align}
it is sufficient to explain how to compute the gradients of the $\mathcal D_i$.
Since 
\begin{align*}
\mathcal D_i (u) = \dist(\mathcal A(u)_i,f_i)^p 
\end{align*}
the gradient $\nabla \mathcal D_i(u)$ of $\mathcal D_i$ at $u \in \mathcal M^K$ equals the gradient of the 
distance mapping 
\begin{align}\label{eq:defDistFixSecond}
y \mapsto \dist^p_{f_i}(y) = \dist (y,f_i)^p
\end{align}
applied to the adjoint of the differential of the mapping
\begin{align}\label{eq:MapToDeriveGrad}
  M: \mathcal M^K \to \mathcal M, \qquad    u \mapsto  \argmin_{x \in \mathcal M}  \ \sum\nolimits_j  A_{i,j} \ \dist(x,u_j)^2. 
\end{align}
This is a consequence of the chain rule together with the fact that forming the adjoint of the concatenation of two differentials changes the order of the involved arguments. 
(For further background information on Riemannian geometry, we refer to the books \cite{spivak1975differential,do1992riemannian}.)
The gradient of the distance mapping $\dist^p_{f_i}$ is given by (cf., e.g., \cite{afsari2011riemannian})
\begin{align}\label{eq:gradientpPowerDist}
	\nabla\dist^p_{f_i}(y) = - p\|\exp^{-1}_y(f_i)\|^{p-2} \exp^{-1}_y(f_i).
\end{align}
Here $\exp^{-1}$ denotes the inverse of the Riemannian exponential function. 
In particular, for $p=1,2$ we have
\begin{align}\label{eq:gradient12Dist}
\nabla\dist^2_{f_i}(y) = - 2 \exp^{-1}_y(f_i),\qquad  
\nabla\dist_{f_i}(y) = - \frac{\exp^{-1}_y(f_i)}{\|\exp^{-1}_y(f_i)\|},
\end{align}
whenever $y \neq f_i.$ 

In order to find the differential of the mapping $M$ of \eqref{eq:MapToDeriveGrad},
it is sufficient to compute the differential of $M$ w.r.t.\ the elements $u_i$ in $\mathcal M.$
To this end, we use the function $\mathcal W: \mathcal M ^{K+1} \to \mathcal{TM},$
\begin{align}\label{eq:VecFieldPure}
	\mathcal W (u_1,\ldots,u_K,m) = \sum\nolimits_{j=1}^K A_{ij}  \exp^{-1}_m(u_j). 	
\end{align}
Here, $\mathcal{TM}$ denotes the tangent bundle of $\mathcal M.$
As a mapping of the argument $m,$ $\mathcal W$ is a tangent vector field.
Furthermore, an intrinsic mean is a zero of the corresponding vector field;
see \cite{groisser2004newton}. More precisely, every mean $M(u_1,\ldots,u_K)$ of $u_1,\ldots,u_K,$  the mean given by \eqref{eq:MapToDeriveGrad}, is a zero of $\mathcal W,$ i.e.,
\begin{align}\label{eq:ZeroVecField}
\mathcal W (u_1,\ldots,u_K,M(u_1,\ldots,u_K)) = 0. 	
\end{align} 
(If there is a unique zero of $\mathcal W,$ then the last component $m$ is the mean $M(u_1,\ldots,u_K)$ of $u_1,\ldots,u_K.$  In case of non-uniqueness, the characterization holds at least locally.)

Next, we calculate the derivative of the left-hand side of \eqref{eq:ZeroVecField}, i.e., of the  mapping   
\begin{align}\label{eq:ZeroVecFieldDerRightHs}
\mathcal W' : (u_1,\ldots,u_K) \mapsto \mathcal W (u_1,\ldots,u_K,M(u_1,\ldots,u_K)).
\end{align} 
We note that $\mathcal W'$ maps $\mathcal M^K$ to $\mathcal{TM},$ and so its differential maps $\mathcal{TM}^K$  to 
the second tangential bundle $\mathcal{TTM}.$ For a point $(p,v),$ or short $v,$ in $\mathcal{TM}$ 
with corresponding base point $p \in \mathcal M,$ we identify $\mathcal{TTM}_v$ with the direct sum   
$\mathcal{TM}_p \oplus \mathcal{TM}_p,$ the vertical and the horizontal subspace induced by the Riemannian connection as, e.g., discussed in detail in \cite{karcher1977riemannian}.
Let us briefly explain this identification. 
 We represent an element in $\mathcal{TTM},$ i.e., 
 a tangent vector $v'$ in a point $(p,v) \in \mathcal{TM}$, 
 by the corresponding equivalence class $v'=[\gamma]$ of    
 curves in $\mathcal{TM}$ passing through $(p,v) \in \mathcal{TM}.$ 
 Then the horizontal component $[\pi \circ \gamma]$ of $v'$ in $(p,v)$
 is given as the (equivalence class of) the tangent vector in $p$ represented by the curve $\pi \circ \gamma$ 
 obtained by projecting the representative $\gamma$ to $\mathcal M$ via 
 $\pi: \mathcal{TM} \to \mathcal M,$ $\pi(p,v)=p.$
 The vertical component of $v'$ is given by the covariant derivative of the curve $\gamma$ in $\mathcal{TM}$  
w.r.t.\ to its base curve $\pi \circ \gamma$ in $\mathcal{M}.$
We first observe that  
the horizontal component of the differential $\partial \mathcal W'$ of the mapping $\mathcal W'$ in \eqref{eq:ZeroVecFieldDerRightHs} is given by the differential of $M.$
To see this, we note that the projection of the mapping $\mathcal W'$ just equals the mean mapping $M,$
	i.e., $\pi \circ \mathcal W' = M.$
We next consider the vertical component of the differential $\partial_u \mathcal W',$ in particular
the vertical component $\partial^v_{u_{j_0}} \mathcal W'$ w.r.t.\ the variable $u_{j_0} \in \mathcal M,$ $j_0 \in \{1,\ldots,K\}.$  
By \eqref{eq:ZeroVecFieldDerRightHs} $\partial^v_{u_{j_0}} \mathcal W'$ equals the sum of the 
vertical component of the differential $\partial^v_{u_{j_0}} \mathcal W$ given by \eqref{eq:VecFieldPure} w.r.t.\ the $j_0th$ variable (not to confuse with $\mathcal W'$) and, as the second summand, the vertical component of the differential of the 
concatenation of $f_1: u_{j_0} \mapsto M(u_1,\ldots,u_K)$ and $W^m: m \mapsto W (u_1,\ldots,u_K,m).$ 
The mapping $W^m$ is a vector field, and so $W^m \circ f_1$ is a vector field along $f_1.$  
Applying the chain rule for covariant derivatives (cf., e.g., \cite{ballmann2002vector}), 
the vertical component of the differential of $W^m \circ f_1$ is given as the covariant derivative $\nabla_{\partial_{u_{j_0}}f_1}W^m.$ 
Summing up, we have for the vertical component of the differential $\partial_u \mathcal W'$ at the point $u \in \mathcal M^K$ in direction $w,$
\begin{align}\label{eq:ChainRuleAppl2VecField}
\partial^v_{u_{j_0}} \mathcal W'(u) \ w  =  
\partial^v_{u_{j_0}} \mathcal W(u) \ w + 
\nabla_{\partial_{u_{j_0}}M(u) \ w} \mathcal W^m.     
\end{align}

We first consider the vertical component of the differential $\partial^v_{u_{j_0}} \mathcal W$ 
of $\mathcal W$ w.r.t. its components $u_{j_0}$. The differential $\partial_{u_{j_0}} \mathcal W$ 
equals the differential of the mapping 
\begin{align}
u_{j_0}  \mapsto \sum_{j=1}^K A_{ij}  \exp^{-1}_m(u_j),  \qquad {j_0} \in \{1,\ldots,K\}
\end{align}
which is given by
\begin{align}  
\partial_{u_{j_0}} \mathcal W = A_{i{j_0}} \partial_{u_{j_0}} \exp^{-1}_m(u_{j_0}).
\end{align}    
Note that since $m$ is fixed, covariant derivation amounts to taking ordinary derivatives in $\mathcal{TM}_p$ and the horizontal component of the differential equals $0.$ 
The vertical component $\partial^v_{u_{j_0}} \exp^{-1}_m(u_{j_0})$ of $\partial_{u_{j_0}} \exp^{-1}_m(u_{j_0})$ can be described in terms of Jacobi fields along the geodesic 
$\gamma$ connecting $m=\gamma(0)$ and $u_{j_0}=\gamma(1).$
It is given by the boundary to initial value mapping 
\begin{align}\label{eq:Littlerjay}
	r_{j_0} := \partial^v_{u_{j_0}} \exp^{-1}_m(u_{j_0}):\quad    \mathcal{TM}_{u_{j_0}} \to \mathcal{TM}_{m}, \quad J(1) \mapsto \frac{D}{dt} J (0),     
\end{align}
where the $J$ are the Jacobi fields with $J(0)=0$ which parametrize $\mathcal{TM}_{u_{j_0}}$ via evaluation $J \mapsto J(1).$  
We note that this mapping is well-defined for non conjugate points which is the case for close enough points.
Hence, the vertical component  $\partial^v_{u_{j_0}} \mathcal W$ applied to a tangent vector $w_{j_0}$ in $\mathcal{TM}_{u_{j_0}}$
is given by 
\begin{align}\label{eq:DefRj0}
 R_{j_0} w_{j_0}  := \partial^v_{u_{j_0}} \mathcal W  w_{j_0}  = A_{i{j_0}} 	r_{j_0} \ w_{j_0}.    
\end{align}

For the second summand in \eqref{eq:ChainRuleAppl2VecField}, we first calculate the Riemannian connection $\nabla_v \mathcal W^m$ applied to the vector field $\mathcal W^m$ and an arbitrary vector $v.$ 
For  $\mathcal W^m,$ which is given by 
\begin{align}
m  \mapsto \sum\nolimits_{j=1}^K A_{ij}  \exp^{-1}_m(u_j),  
\end{align}
we first observe that 
\begin{align} \label{eq:WderivedwrtM} 
\partial_{m} \mathcal W^m = \sum\nolimits_{j=1}^K A_{i{j}} \partial_{m} \exp^{-1}_m(u_{j_0}).
\end{align}   
Its horizontal component equals the identity in every point;  
its vertical component $\partial^v_{m} \mathcal W$ applied to $v \in \mathcal{TM}_m$ equals
the covariant derivative $\nabla_v \mathcal W^m.$
Also $\nabla_v \mathcal W^m$ can be described in terms of Jacobi fields using \eqref{eq:WderivedwrtM}.
To this end, let $\gamma_j$ be the geodesic 
connecting $m=\gamma_j (0)$ and $u_{j}=\gamma_j(1),$ for $j \in \{1,\ldots,K\}.$
For each geodesic $\gamma_j,$ $j \in \{1,\ldots,K\},$ consider the Jacobi fields $J_j$ with $J_j(1) = 0$
(which correspond to geodesic variations leaving $u_j$ fixed,)
and the mappings
\begin{align}\label{eq:littlelJay}
l_j:\    \mathcal{TM}_{m} \to \mathcal{TM}_{m}, \quad J_j(0) \mapsto \frac{D}{dt} J_j (0),  \qquad    j \in \{1,\ldots,K\},
\end{align}
where the $J_j$ are the Jacobi fields with $J_j(1)=0$ which all parametrize $\mathcal{TM}_{m}$ via the evaluation map $J_j \mapsto J_j(0).$  
Again, we note that this mapping is well-defined for non conjugate points which is the case for close enough points.
Since $l_j$ equals the vertical part of $\partial_{m} \exp^{-1}_m(u_{j_0}),$ we have that 
\begin{align}\label{eq:DefL}
  L \ v := \nabla_v \mathcal W^m      = \sum\nolimits_{j=1}^K A_{i{j}} \ l_j \ v.
\end{align}
We note that the mapping $L$ is invertible whenever the mean is unique 
(since then the vector field has only one zero;
this is guaranteed, if the points $u_j$ are close enough; see \cite{afsari2011riemannian}.)
Using these derivations we may use \eqref{eq:ChainRuleAppl2VecField} to compute the derivative of the mapping $M$ mapping the 
$K$ points $u_j$ to their intrinsic mean $M(u_1,\ldots,u_K).$

\begin{theorem}\label{thm:ComputeDerivativeOfM}
Using the notation introduced above, we have 	
\begin{align}\label{eq:thmEq1}
\partial^v_{u_{j_0}} \mathcal W(u) \ w + 
\nabla_{\partial_{u_{j_0}}M(u) \ w} \mathcal W^m =  0.	
\end{align}	 
In particular, the derivative of the intrinsic mean mapping in direction $w \in \mathcal{TM}_{u_{j_0}}$ is given by  
\begin{align}\label{eq:thmEq2}
\partial_{u_{j_0}}M(u) \ w  =  - L^{-1} R_{j_0} w
\end{align}	 
where the linear mapping $L$ is given by \eqref{eq:DefL},
and the linear mappings $R_{j_0},$ $j_0 \in\{1,\ldots,K\}$,  are given by \eqref{eq:DefRj0}. 
Then, the gradient $\nabla \mathcal D_i(u)$ at $u \in \mathcal M^K$ is given 
by the  the adjoint of $- L^{-1} R_{j_0},$ applied to the gradient of the 
distance mapping given in \eqref{eq:gradientpPowerDist},
 i.e.,
\begin{align}\label{eq:thmEq3}
  \nabla_{u_{j_0}} \mathcal D_i(u) = p\|\exp^{-1}_y(f_i)\|^{p-2}  R_{j_0}^\ast   L^{-1 \ast}  \exp^{-1}_y(f_i)	
\end{align}
where $R_{j_0}^\ast$ denotes the adjoint of $R_{j_0},$ 
and $L^{-1\ast}$ denotes the adjoint of $L^{-1}.$	 
\end{theorem}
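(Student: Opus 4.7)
The plan is to differentiate the zero-level characterization \eqref{eq:ZeroVecField} of the intrinsic mean, extract the vertical component of that derivative, and finally invoke the chain rule together with the explicit formula \eqref{eq:gradientpPowerDist} for the gradient of the $p$-th power of the Riemannian distance. All three claims then fall out in order.

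First I would establish \eqref{eq:thmEq1}. Starting from \eqref{eq:ZeroVecField}, which says $\mathcal{W}'(u) \equiv 0$ along the mapping $u \mapsto (u,M(u))$, I differentiate both sides with respect to $u_{j_0}$ and apply the tangent vector $w \in \mathcal{TM}_{u_{j_0}}$. Since the left-hand side is identically the zero section, the horizontal and vertical components of its differential must both vanish. The horizontal component is computed by observing $\pi \circ \mathcal{W}' = M$, so it recovers $\partial_{u_{j_0}} M(u)\,w$ (which is nontrivial but vanishes trivially as the zero section projects to a constant curve in $\mathcal M$; equivalently, this is the statement that $M$ is well-defined). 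The vertical component is exactly the left-hand side of \eqref{eq:ChainRuleAppl2VecField}, computed via the chain rule for covariant derivatives of vector fields along curves as outlined after that display. Setting it to zero yields \eqref{eq:thmEq1}.

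For \eqref{eq:thmEq2}, I would substitute the definitions of $R_{j_0}$ from \eqref{eq:DefRj0} and of $L$ from \eqref{eq:DefL} into \eqref{eq:thmEq1}. This rewrites the identity as $R_{j_0} w + L\bigl(\partial_{u_{j_0}} M(u)\,w\bigr) = 0$. Since, as noted after \eqref{eq:DefL}, the operator $L$ is invertible whenever the intrinsic mean is locally unique (which is guaranteed in the regime where the points $u_j$ lie close enough together), I may solve for $\partial_{u_{j_0}} M(u)\,w = -L^{-1} R_{j_0} w$, which is exactly \eqref{eq:thmEq2}.

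For \eqref{eq:thmEq3}, I would apply the Riemannian chain rule to the composition $\mathcal{D}_i = \dist^p_{f_i} \circ M$. The gradient of the outer map at $y = M(u)$ is given by \eqref{eq:gradientpPowerDist}, while the differential of the inner map in the direction of $u_{j_0}$ is $-L^{-1} R_{j_0}$ by \eqref{eq:thmEq2}. Passing to adjoints reverses the order of composition, so the gradient with respect to $u_{j_0}$ becomes $(-L^{-1}R_{j_0})^{\ast}$ applied to $-p\|\exp^{-1}_y(f_i)\|^{p-2}\exp^{-1}_y(f_i)$. The two minus signs cancel and one arrives at \eqref{eq:thmEq3}. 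The main obstacle in this whole argument is the careful handling of the splitting $\mathcal{TTM}_v = \mathcal{TM}_p \oplus \mathcal{TM}_p$ and of the chain rule for the vector field $\mathcal{W}^m$ pulled back along $f_1: u_{j_0} \mapsto M(u)$; once \eqref{eq:ChainRuleAppl2VecField} is justified the remainder is a clean algebraic manipulation together with the Jacobi field identifications \eqref{eq:Littlerjay} and \eqref{eq:littlelJay}.
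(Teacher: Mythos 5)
Your proposal follows essentially the same route as the paper: differentiate the zero-field identity \eqref{eq:ZeroVecField}, read off the vertical component via \eqref{eq:ChainRuleAppl2VecField} to get \eqref{eq:thmEq1}, solve using the invertibility of $L$ for \eqref{eq:thmEq2}, and conclude \eqref{eq:thmEq3} by the chain rule with adjoints and \eqref{eq:gradientpPowerDist}. One side remark is wrong, though harmless here: the horizontal component of $\partial\mathcal W'$ does \emph{not} vanish --- since $\pi\circ\mathcal W' = M$ and the base point of the zero vector $0_{M(u)}$ moves with $u$, the horizontal part equals $\partial_{u_{j_0}}M(u)\,w$, which is generally nonzero; it is only the vertical component that vanishes (the covariant derivative of the zero section along any curve is zero), and that is the only piece your derivation of \eqref{eq:thmEq1} actually uses.
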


\begin{proof}
   To see \eqref{eq:thmEq1} we have a look at \eqref{eq:ZeroVecField}.
   According to \eqref{eq:ChainRuleAppl2VecField}, the vertical component of the differential of the left-hand side of \eqref{eq:ZeroVecField} is given by 
   the left-hand side of \eqref{eq:thmEq1}. The right-hand side of \eqref{eq:ZeroVecField} is the zero vector in the tangential space 
   of $M(u_1,\ldots,u_K).$ Hence, the vertical component of the differential of the right-hand side of \eqref{eq:ZeroVecField}
   equals the zero-vector in the point $M(u_1,\ldots,u_K)$ which shows \eqref{eq:thmEq1}.
   Now, \eqref{eq:thmEq2} is a consequence of the discussion near \eqref{eq:DefRj0} and \eqref{eq:DefL} above.
   The last assertion \eqref{eq:thmEq2} follows from the discussion  
   near \eqref{eq:defDistFixSecond} and \eqref{eq:MapToDeriveGrad} and by \eqref{eq:gradient12Dist}.
   
\end{proof}

If the manifold $\mathcal M$ is a Riemannian symmetric space, the Jacobi fields 
needed to compute the mappings $R_{j_0},$ 
and $L$ in the above theorem can be made more explicit.
As a reference on symmetric spaces we refer to \cite{cheeger1975comparison}.
As above in \eqref{eq:DefRj0} we consider the geodesic
$\gamma$ connecting $m=\gamma(0)$ and $u_{j_0}=\gamma(1).$
Along $\gamma$ we consider the Jacobi fields $J$ with $J(0)=0$ 
(which correspond to geodesic variations along $\gamma$ leaving $m$ fixed.) 
Let $(w^n)_n$ be an orthonormal basis of eigenvectors of the self-adjoint Jacobi operator 
$J \mapsto R(\frac{\gamma'(0)}{\|\gamma'(0) \|},J)\frac{\gamma'(0)}{\|\gamma'(0) \|},$ 
where $R$ denotes the Riemannian curvature tensor.
For convenience, we let $w^1$ be tangent to $\gamma.$
For each $n$, we denote the eigenvalue associated with $w^n$ by $\lambda_n.$ 
We use this basis to express the adjoint $R_{j_0}^\ast$ of $R_{j_0}$ of Theorem~\ref{thm:ComputeDerivativeOfM} given by \eqref{eq:DefRj0}.
Using the expression \eqref{eq:Littlerjay}, we get that 
\begin{equation}\label{eq:DiffOfLogInSym}
w= \sum\nolimits_n \alpha_n  w^n \ \mapsto \ R_{j_0}^\ast w =  A_{ij_{0}}\sum\nolimits_n \alpha_n f_1(\lambda_n) \ \pt_{m,u_{j_0}} w^n.
\end{equation}
Here, $\pt_{m,u_{j_0}} w^n$ denotes the parallel transport of the basis vector $w^n$ from the point $m$ to the point $u_{j_0}.$ 
The coefficients $\alpha_n$ are the coefficients of $w$ w.r.t.\ the othogonal basis  $w^n$ and 
the function $f_1,$ depending on the sign of $\lambda_n$ is given by 
\begin{equation}\label{eq:Fcalc4Rj}
f_1(\lambda_n) = 
\begin{cases}
1,   & \quad \text{if} \ \lambda_n = 0, \\
\frac{\sqrt{\lambda_n} d}{\sin (\sqrt{\lambda_n} d)},    & \quad \text{if} \ \lambda_n > 0, \quad  d < \pi/\sqrt{\lambda_n},\\
\frac{\sqrt{-\lambda_n} d}{\sinh (\sqrt{-\lambda_n} d)},  &  \quad \text{if} \ \lambda_n < 0,
\end{cases}
\end{equation}
where $d= \dist(m,u_{j_0}).$ For a derivation of \eqref{eq:Fcalc4Rj}, we refer to \cite{bredies2017total}.

Similarly, we may compute $l_{j_0}$ defined by \eqref{eq:littlelJay}.
We may use the same geodesic $\gamma$ we used above to connect $m=\gamma(0)$ and $u_{j_0}=\gamma(1),$
and the orthonormal basis $(w^n)_n$ of eigenvectors of the self-adjoint Jacobi operator $R$ w.r.t.\ the geodesic $\gamma$ above.
In contrast, we consider the Jacobi fields $J$ with $J(1)=0$
(which correspond to geodesic variations along $\gamma$ leaving $u_{j_0}$ fixed.)
We get, for $l_{j_0}^\ast: \mathcal{TM}_{m} \to \mathcal{TM}_{m},$
\begin{align}\label{eq:ComputeljotIn}
l_j^\ast :  \ w= \sum_n \alpha_n  w^n  \mapsto  l_j^\ast (w) = \sum_n f_2(\lambda_n) \ \alpha_n \  w^n, 
\end{align}
where 
the function $f_2,$ depending on the sign of $\lambda_n$ is given by 
\begin{equation}\label{eq:Fcalc4Rj2}
f_2(\lambda_n) = 
	 \begin{cases}
	     -1,   &\text{if} \quad \lambda_n = 0,  \\
	     - d \sqrt{\lambda_n} \ \frac{  \cos( \sqrt{\lambda_n} d)}{\sin( \sqrt{\lambda_n} d)},  
	     & \text{if} \quad \lambda_n > 0, \quad  d < \pi/\sqrt{\lambda_n}, \\
	     - d \sqrt{-\lambda_n} \ \frac{  \cosh( \sqrt{-\lambda_n} d)}{\sinh( \sqrt{-\lambda_n} d)}, 
	     & \text{if} \quad \lambda_n < 0.
	\end{cases}	     
\end{equation}
Using \eqref{eq:DefL}, we can, in a symmetric space, compute $L^\ast$ by 
\begin{align}
L^\ast w = \sum\nolimits_{j=1}^K A_{i{j}} \ l_j^\ast \ w, \quad \text{with} \quad  l^\ast_j \ w  \text{ given by } \eqref{eq:ComputeljotIn}.
\end{align}

\paragraph{Proximal Mappings for the Atoms of the Indirect Measurement Term.}

If we employ the cyclic proximal point scheme \eqref{eq:algCPPA}, we have to compute the proximal mappings 
of the atoms $\mathcal D_i$ of the data term, i.e., 
\begin{align}\label{eq:prox4Data}
 \prox_{\mu \mathcal D_i} u = 
 \argmin_{u' \in \mathcal{M^K}} \dist(\mathcal A(u')_i,f_i)^p + \frac{1}{2 \mu} \sum\nolimits_{k} \dist(u_k,u_k')^2, \qquad \mu > 0. 
\end{align}
This is particularly interesting, if the exponent $p$ of the distance function equals $one$ since, in this case,  
the distance function is not differentiable on the diagonal. 

Since for the case of an indirect measurement term no closed form expression of the proximal mapping \eqref{eq:prox4Data}
seems available,
we use a gradient descent scheme for $p>1,$  and a subgradient descent scheme for $p=1$, 
in order to compute the proximal mapping of the atoms $\mathcal D_i.$ 
Subgradient descent has already been used used to compute the proximal mappings of the $\TV_2$
and the $\TGV$ terms  in a manifold setting in \cite{bavcak2016second} and in \cite{bredies2017total}, respectively.
(Only for the $\TV$ term, a closed form proximal mapping was derived \cite{weinmann2014total}.)

Let us explain the (sub)gradient descent scheme to compute \eqref{eq:prox4Data}. 
We first observe that, by \eqref{eq:gradient12Dist}, the gradients 
of the mapping $u_k' \mapsto \dist(u_k,u_k')^2$ are given by  $-\log_{u'_{k}}u_{k}.$ 
The gradient of the first summand in \eqref{eq:prox4Data} was computed in the previous paragraph for $p>1$;  
see \eqref{eq:defDistFixSecond}, \eqref{eq:MapToDeriveGrad} and the following derivations of the adjoint of the differential of the mapping $m.$ If $p=1,$ we consider the (sub)gradient of $\dist_{f_i}(y) = \dist (y,f_i)$ which is, for $y \neq f_i$,
given by \eqref{eq:MapToDeriveGrad} as well and concatenate it with the the adjoint of the differential of the mapping $M;$
cf. \eqref{eq:thmEq2}.

\section{Experimental Results}\label{sec:Experiments}

\begin{figure}[!tp]
\def\figfolderA{experiments/compare_kernels_S1/}
\def\figfolderB{experiments/compare_kernel_Pos3/}
\def\hs{\hfill}
\def\vs{\vspace{0.03\textwidth}}
\def\figurewidth{0.3\textwidth}
\centering
{
\footnotesize
\begin{tabular}{ccc}
\includegraphics[width=\figurewidth]{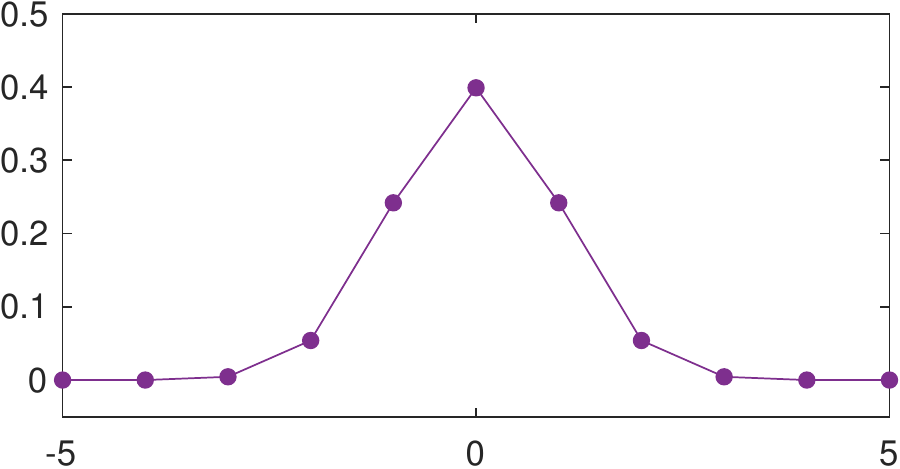} &
\includegraphics[width=\figurewidth]{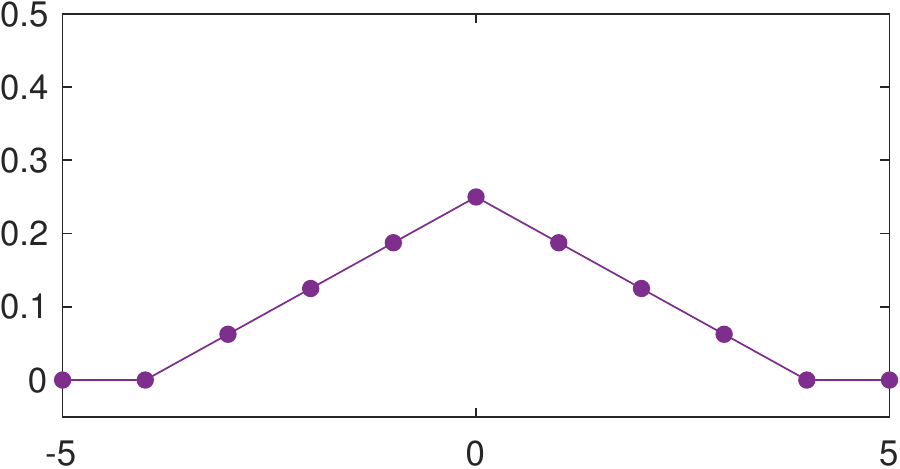} &
\includegraphics[width=\figurewidth]{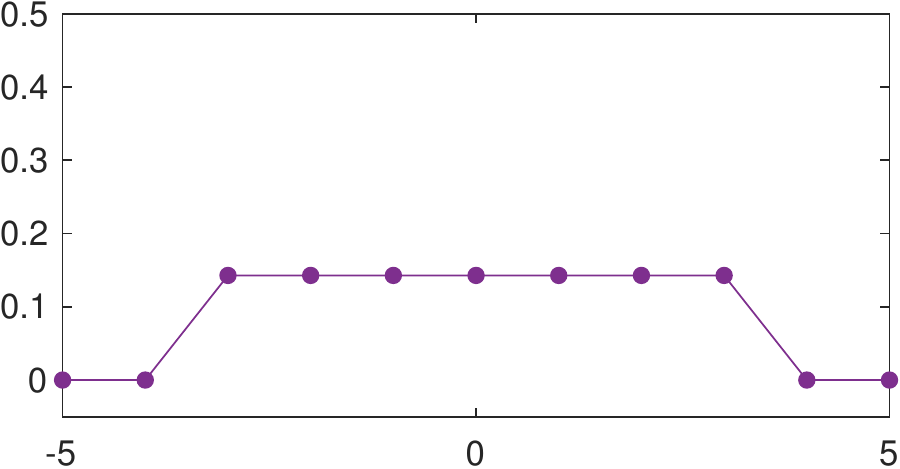} \\[3ex]
\includegraphics[width=\figurewidth]{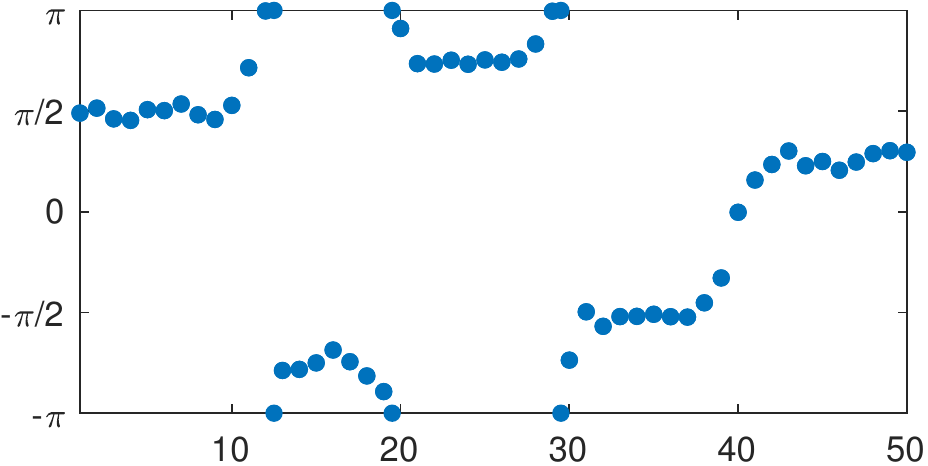}   &
\includegraphics[width=\figurewidth]{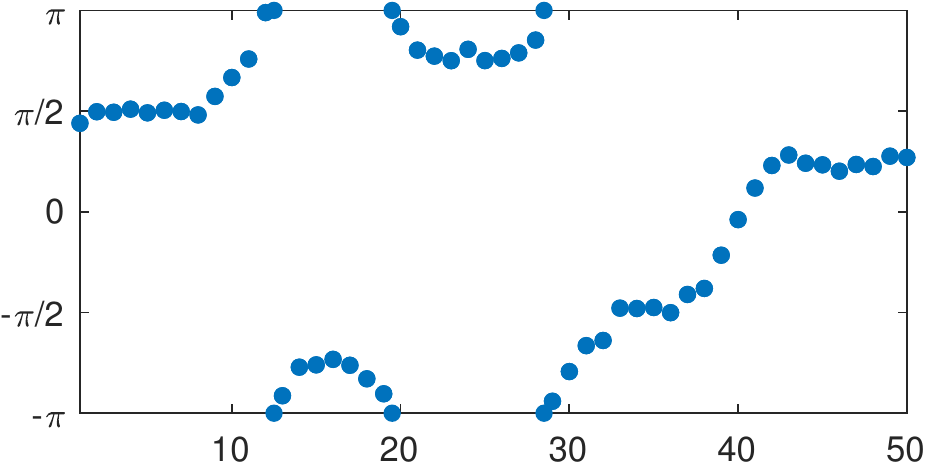}   &
\includegraphics[width=\figurewidth]{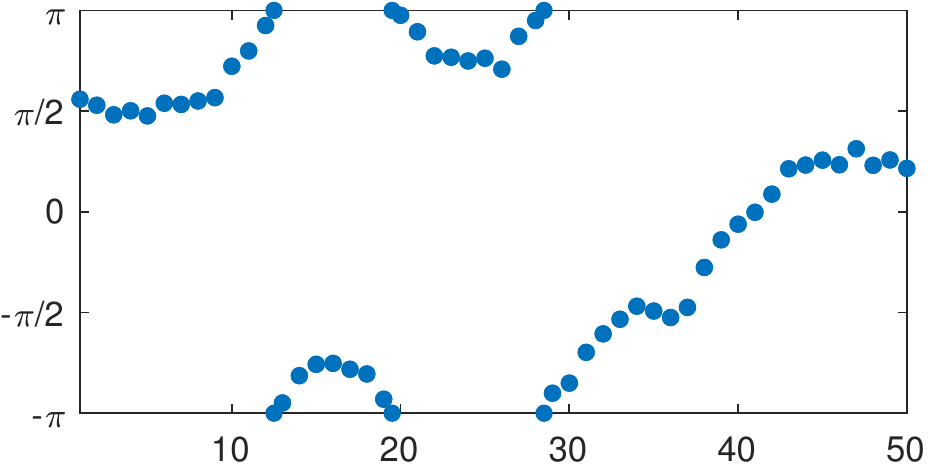}   \\[2ex]
\includegraphics[width=\figurewidth]{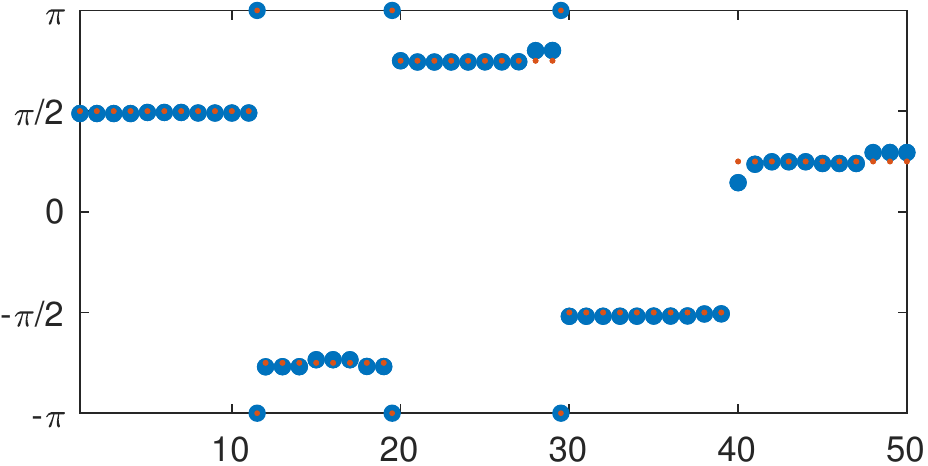} &
\includegraphics[width=\figurewidth]{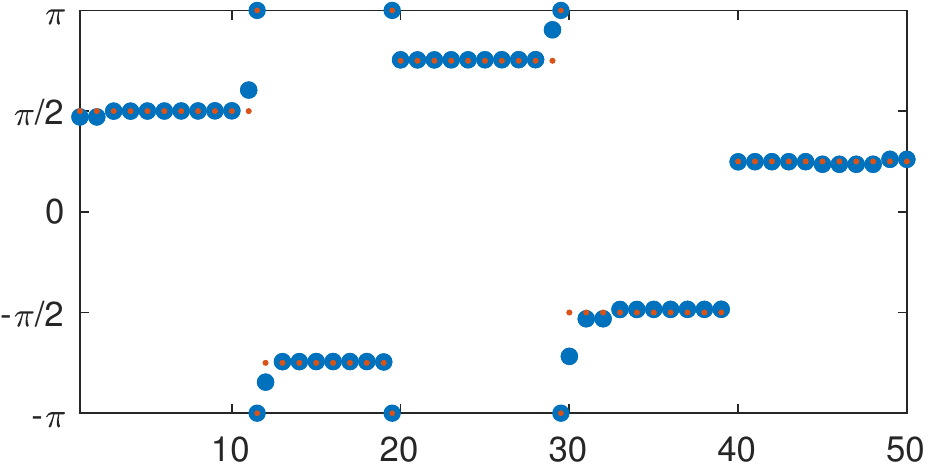} &
\includegraphics[width=\figurewidth]{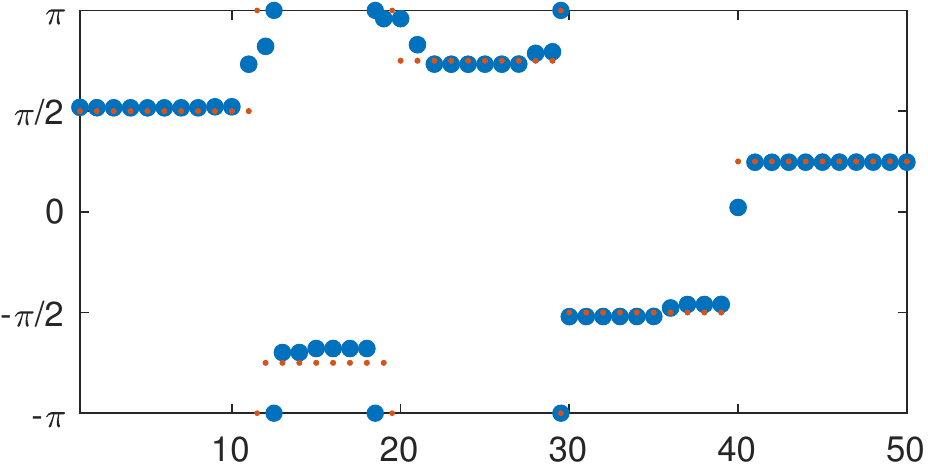} \\
$\deltaSNR$: $\input{\figfolderA exp_compare_kernels_deltaSNR_1.txt}$ dB &
$\deltaSNR$: $\input{\figfolderA exp_compare_kernels_deltaSNR_3.txt}$ dB&
$\deltaSNR$: $\input{\figfolderA exp_compare_kernels_deltaSNR_2.txt}$ dB \\[4ex]
\includegraphics[width=\figurewidth]{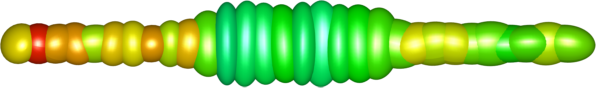}   &
\includegraphics[width=\figurewidth]{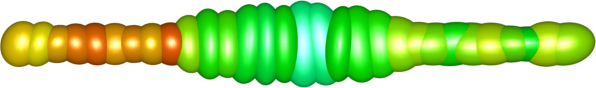}   &
\includegraphics[width=\figurewidth]{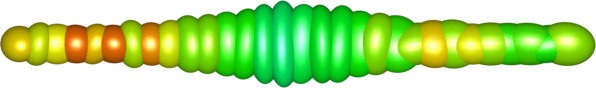}   \\[3ex]
\includegraphics[width=\figurewidth]{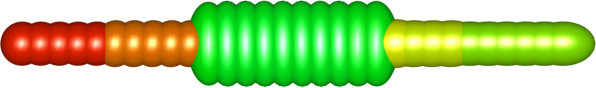} &
\includegraphics[width=\figurewidth]{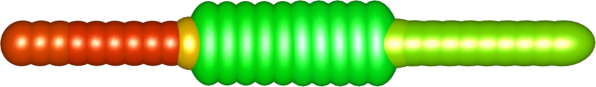} &
\includegraphics[width=\figurewidth]{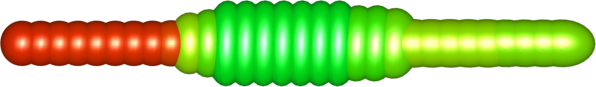} \\[2ex]
$\deltaSNR$: $\input{\figfolderB exp_compare_kernels_Pos3_deltaSNR_1.txt}$ dB &
$\deltaSNR$: $\input{\figfolderB exp_compare_kernels_Pos3_deltaSNR_3.txt}$ dB &
$\deltaSNR$: $\input{\figfolderB exp_compare_kernels_Pos3_deltaSNR_2.txt}$ dB
\end{tabular}
}
\caption{
Deconvolution results for 
a synthetic $S^1$-valued signal 
and a synthetic $\Pos_3$-valued signal
for 
a Gaussian kernel with $\sigma = 1$ \emph{(left)}, 
a triangular kernel \emph{(middle)}, and a moving average kernel \emph{(right)},
all of support size $7.$
\emph{From top to bottom:} Visualization of the kernel, convolved noisy $S^1$-valued signal, 
 deconvolution result (TV regularization with $p=2,$ $\lambda = 0.1$, groundtruth shown as red dashed line), convolved noisy $\Pos_3$-valued signal,  deconvolution result (TV regularization with $p=1,$ $\lambda = 1$).
}
\label{fig:s1_signal}
\end{figure}

\paragraph{Experimental setup.}
We carry out experiments for data with values in the circle $S^1$, the sphere $S^2$ and 
 the manifold $\Pos_3$ of positive definite matrices equipped with the Fisher-Rao metric.
 $S^1$ valued data is visualized by the phase angle, and color-coded as hue value in the HSV color space when displaying image data.
 We visualize $S^2$ valued data
by a color coding based on Euler angles as shown in Figure~\ref{fig:S2_img}.
Data on the $\Pos_3$ manifold is visualized
by the isosurfaces of the corresponding quadratic forms. More precisely, the ellipse visualizing
the point $f_p$ at voxel $p$ are the points $x$ fulfilling $(x-p)^\top f^{-1}_p (x-p) = c,$ for some $c>0.$
To quantitatively measure the  quality of a reconstruction,
we use the manifold variant of the \emph{signal-to-noise ratio improvement} 
$
	\deltaSNR = 10 \log_{10} \left(  \sum_{ij} d(g_{ij}, f_{ij})^2  / \sum_{ij} d(g_{ij}, u_{ij})^2\right) \dB,
$
see \cite{weinmann2014total}.
Here $f$ is the noisy data, $g$ is the ground truth, and $u$ is a regularized  reconstruction.
A higher $\deltaSNR$ value  means better reconstruction quality.
As in \cite{bredies2017total}, 
we parametrize the model parameters  $\lambda_0,\lambda_1$ of TGV
by 
$\lambda_0 =  r \frac{(1 - s)}{s'},$ and by $\lambda_1 =  r \frac{s}{s'},$
with $s' = \min(s, 1-s)$
so that  $r \in (0, \infty)$ controls the overall regularization strength 
and $s \in (0,1)$ the balance of the two TGV penalties.
All examples were computed using 1000 iterations.
We have implemented the presented methods in Matlab 2017b.
We used functions of the toolboxes CircStat~\cite{berens2009circstat}, Manopt~\cite{manopt}, MVIRT~\cite{bavcak2016second}, 
and implementations from the authors' prior works \cite{weinmann2014total, bredies2017total}.

\begin{figure}
\def\figfolder{experiments/compare_algos_S1_S2_Pos3/}
\def\hs{\hfill}
\def\vs{\vspace{0.03\textwidth}}
\def\figurewidth{0.8\textwidth}
\centering
\includegraphics[width=\figurewidth]{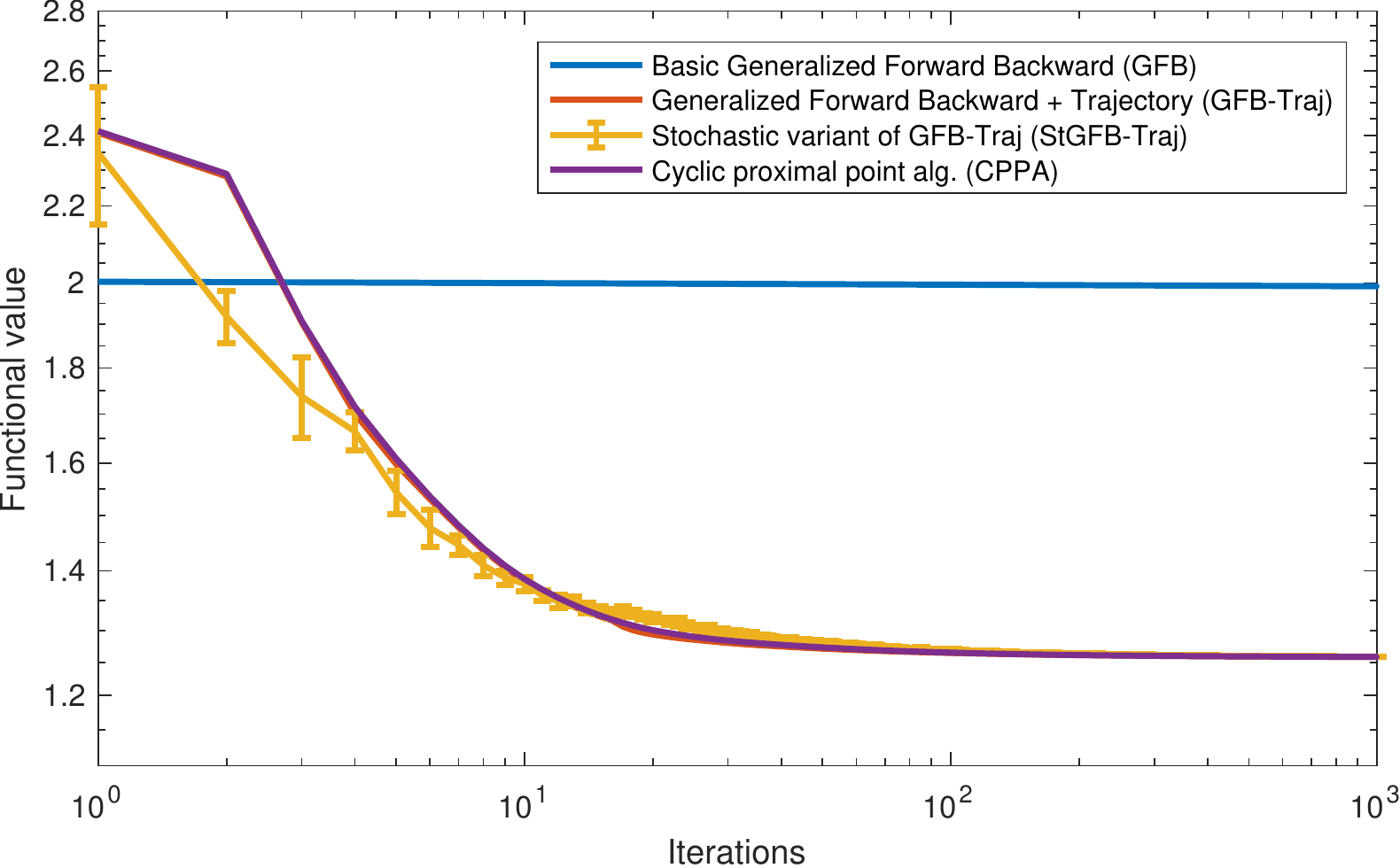} 
\caption{
Functional value of the different algorithmic variants 
in dependence on the number of iterations for the setup in Figure~\ref{fig:s1_signal} ($S^1$ valued data with Gaussian kernel).
The basic generalized forward-backward scheme leads to very small decrements of the functional value. 
The cyclic proximal point algorithm, the trajectory variant 
and their stochastic variants (displayed averages over ten runs and error bars) provide a much faster degression.
}
\label{fig:compare_algos_S1}
\end{figure}

\paragraph{Results for basic univariate signals.}
We start with reconstruction results of univariate signals
where we for the moment focus on the prototypical case of TV regularization, i.e., $R(u) = \TV(u).$ 
In the first experiment, we performed manifold-valued convolutions on piecewise constant signals 
with three different types of kernels:
Gaussian kernels, triangular kernels, and moving average kernels; see Figure~\ref{fig:s1_signal}.
The $S^1$-valued signal was corrupted by von Mises noise with concentration parameter $\kappa = 100.$
Here, we use the parameter $p=2$ in the data fidelity term.
The $\Pos_3$ valued signal was corrupted by Rician noise of level $30.$ 
We use $p=1$ for the $\Pos_3$ valued data as it is more suitable for this kind of data; cf. \cite{weinmann2014total}. The shown reconstructions were computed using 
 the deterministic trajectory variant for the $S^1$-valued signal, and by the cyclic proximal point variant for the $\Pos_3$ valued signal.
Qualitatively, the resulting signals are close to the ground truth.
In particular, the jumps, which were smoothed out by the convolution, are recovered.
Note that the phase jump between $-\pi$ and $\pi$
is properly taken into account for the spherical data.
Quantitatively, the results exhibit significant signal-to-noise ratio improvements
ranging from  $3.6$~dB up to  $11.6$~dB. 

\paragraph{Comparison of the proposed algorithmic variants.}
We compare the performance of the 
four algorithmic variants discussed above:
the basic generalized forward backward scheme (GFB), 
the generalized forward backward scheme with trajectory method (GFB-Traj),
its sto\-chastic variant (StGFB-Traj), and
the cyclic proximal point algorithm (CPPA).
To this end, we investigate the evolutions of the functional 
values over the iterations using the $S^1$-data of Figure~\ref{fig:s1_signal}.
The results are reported in Figure~\ref{fig:compare_algos_S1}. 
The basic GFB exhibits slow decay of the functional value
whereas the other three variants achieve a much better  decay.
The reason for this is that the basic GFB typically demands for globally short step sizes
(see the discussion in Section~\ref{sec:Basic_Algorithmic_Structures})
whereas the other schemes adapt to the local situation.
The graphs suggest that
GFB-Traj, StGFB-Traj and CPPA perform similarly good. 
Since in our reference implementation the trajectory variants
were more than ten times faster than the CPPA variant,
we recommend to use the trajectory methods for the case $p=2.$
(However, for $p=1,$ we use the CPPA which in this case is the only applicable scheme.)

\paragraph{Reconstruction results for manifold valued images.}

\begin{figure}
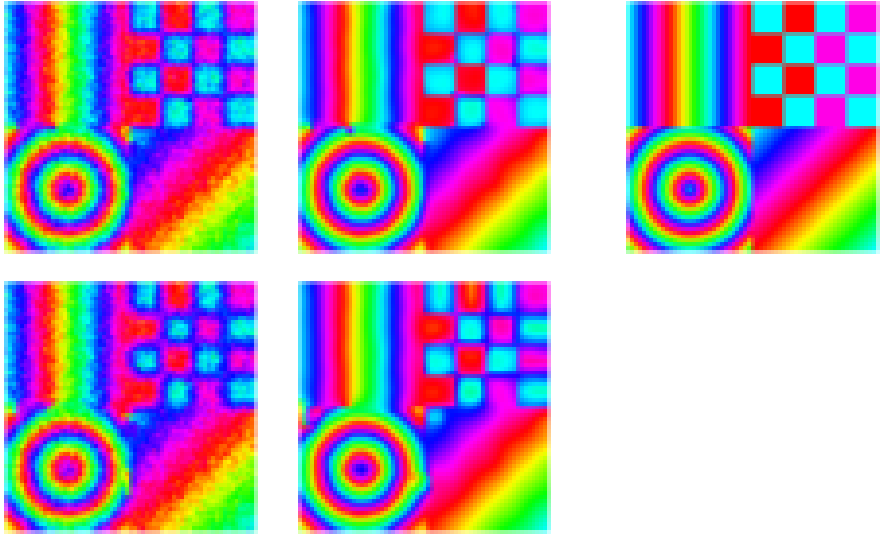


\def\figfolderA{experiments/exp_S1_img_Gauss/}
\def\figfolderB{experiments/exp_S1_img/}
\def\hs{\hspace{0.03\textwidth}}
\def\vs{\vspace{0.03\textwidth}}
\def\figurewidth{0.22\textwidth}
\centering
\includegraphics[width=\figurewidth]{\figfolderA exp_S1_img_data} \hs
\includegraphics[width=\figurewidth]{\figfolderA exp_S1_img_result} \hs\hs
\includegraphics[width=\figurewidth]{\figfolderA exp_S1_img_original} \\[2ex]
\includegraphics[width=\figurewidth]{\figfolderB exp_S1_img_data} \hs
\includegraphics[width=\figurewidth]{\figfolderB exp_S1_img_result} \hs\hs
\phantom{\includegraphics[width=\figurewidth]{\figfolderB exp_S1_img_result}}
\caption{
Deconvolution results for an $S^1$-valued image.
\emph{From left to right:} input data, reconstruction result using TGV regularized deconvolution ($p=2,$ $r = 0.2,$
$s = 0.3)$, and groundtruth, all visualized as hue values.
\emph{Top:}
ground truth convolved with a Gaussian kernel ($5\times 5$ kernel with $\sigma = 1$) and corrupted by von Mises noise;
$\deltaSNR$: $\protect\input{\figfolderA/exp_S1_deltaSNR.txt}$~dB.
\emph{Bottom:} ground truth convolved with a $5\times 5$ moving average kernel and corrupted by von Mises noise; $\deltaSNR$: $\protect\input{\figfolderB/exp_S1_deltaSNR.txt}$~dB.
}
\label{fig:S1_img}
\end{figure}

\begin{figure}
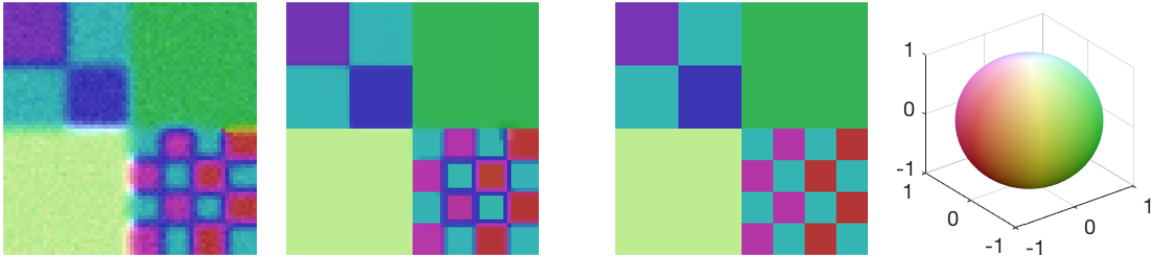

\def\figfolder{experiments/exp_S2_img/}
\def\hs{\hfill}
\def\vs{\vspace{0.03\textwidth}}
\def\figurewidth{0.22\textwidth}
\centering
\includegraphics[width=\figurewidth]{\figfolder exp_S2_img_data} \hs
\includegraphics[width=\figurewidth]{\figfolder exp_S2_img_result} \hs\hs\hs
\includegraphics[width=\figurewidth]{\figfolder exp_S2_img_original} \hs
\includegraphics[width=\figurewidth]{\figfolder exp_synth_S2_colormap}
\caption{
Deconvolution results for an $S^2$-valued image.
\emph{From left to right:} image convolved with a
 $5\times 5$ Gaussian kernel with $\sigma = 1.5$ and corrupted by wrapped Gaussian noise;
TV regularized deconvolution ($p=2,$ $\lambda = 0.1$, $\deltaSNR$: $\protect\input{\figfolder/exp_S2_img_deltaSNR.txt}$ dB); ground truth; 
 color code for the visualization of $S^2$-valued images. 
}
\label{fig:S2_img}
\end{figure}

\begin{figure}
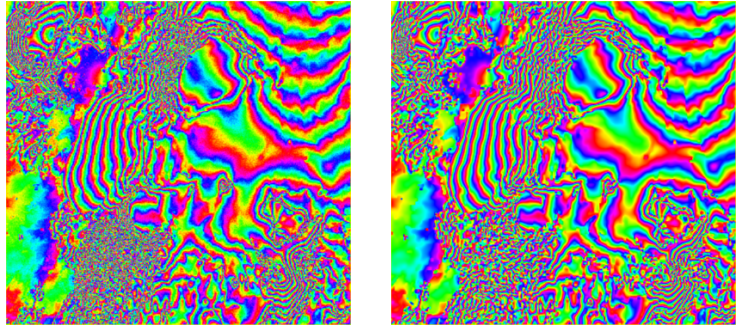

\def\figfolder{experiments/exp_SAR/}
\def\hs{\hspace{0.03\textwidth}}
\def\vs{\vspace{0.03\textwidth}}
\def\figurewidth{0.3\textwidth}
\centering
\includegraphics[width=\figurewidth]{\figfolder exp_S2_img_data} \hs
\includegraphics[width=\figurewidth]{\figfolder exp_S2_img_result} 
\caption{
\emph{Left:} Interferometric SAR image from \cite{thiel1997ers} ($S^1$-valued data). 
	\emph{Right:} Result of TGV regularized deconvolution ($p=2,$ $r = 0.2,$ $s = 0.3$)
	using a $5\times 5$ Gaussian kernel with $\sigma = 1.$ 
}
\label{fig:sar}
\end{figure}

\begin{figure}
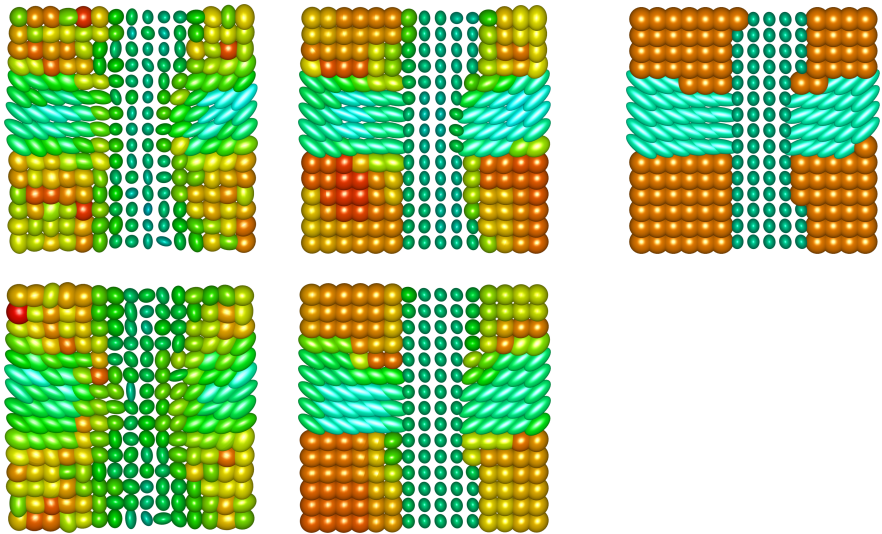

\def\figfolderA{experiments/exp_Pos3_img_Gauss/}
\def\figfolderB{experiments/exp_Pos3_img/}
\def\hs{\hspace{0.03\textwidth}}
\def\vs{\vspace{0.03\textwidth}}
\def\figurewidth{0.22\textwidth}
\centering
\includegraphics[width=\figurewidth]{\figfolderA exp_Pos3_data} \hs
\includegraphics[width=\figurewidth]{\figfolderA exp_Pos3_TGV} \hs\hs
\includegraphics[width=\figurewidth]{\figfolderA exp_Pos3_original} \\[2ex]
\includegraphics[width=\figurewidth]{\figfolderB exp_Pos3_data} \hs
\includegraphics[width=\figurewidth]{\figfolderB exp_Pos3_TGV} \hs\hs
\phantom{\includegraphics[width=\figurewidth]{\figfolderB exp_Pos3_original}}
\caption{
Deconvolution results for a synthetic $\Pos_3$ valued image.
\emph{From left to right:} input data, reconstruction result using 
TV regularization  ($p=1,$ $\lambda = 0.1),$ and groundtruth.
\emph{Top:}
ground truth convolved with a Gaussian kernel ($5\times 5$ kernel with $\sigma = 1$) and corrupted by Rician noise;
$\deltaSNR$: $\protect\input{\figfolderA/exp_Pos3_deltaSNR.txt}$ dB. 
\emph{Bottom:} groundtruth convolved with a $5\times 5$ moving average kernel and corrupted by Rician noise; 
$\deltaSNR$: $\protect\input{\figfolderB/exp_Pos3_deltaSNR.txt}$ dB.}
\label{fig:Pos3_img}
\end{figure}

\begin{figure}
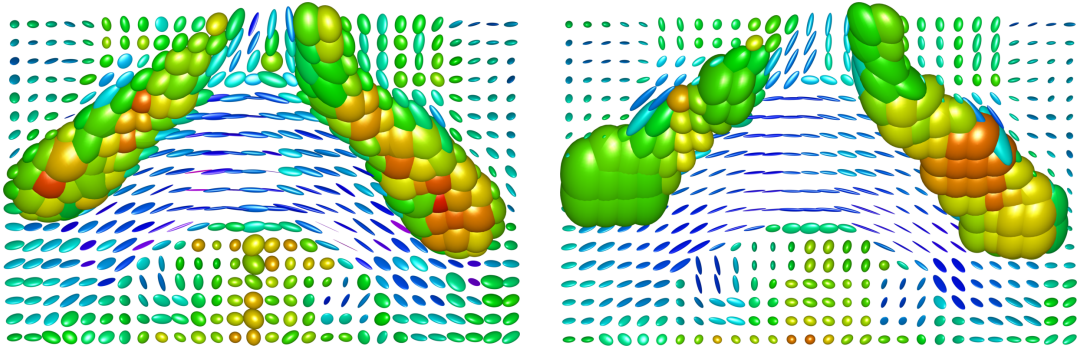

\def\figfolder{experiments/exp_camino/}
\def\hs{\hspace{0.03\textwidth}}
\def\vs{\vspace{0.03\textwidth}}
\def\figurewidth{0.45\textwidth}
\centering
\includegraphics[width=\figurewidth]{\figfolder exp_Pos3_Camino_data} \hs
\includegraphics[width=\figurewidth]{\figfolder exp_Pos3_Camino_TGV} 
\caption{
\emph{Left:}  Diffusion tensor image of a human brain (axial slice)
showing the corpus callosum ($\Pos_3$-valued).
\emph{Right:} TV regularized deconvolution ($p=1,$ $\lambda = 0.1$)
using a $5 \times 5$ Gaussian kernel with $\sigma = 1.$
}
\label{fig:camino}
\end{figure}

We illustrate the effect of the proposed methods for  manifold-valued images.
As before, we use $p=1$ for the $\Pos_3$-valued images, and $p=2$ otherwise.
Following the above discussion,
we use the  CPPA method for $p=1$
and the trajectory method for $p=2.$  
First we consider spherical data.
Figure~\ref{fig:S1_img} shows the TGV-regularized deconvolution of a synthetic  $S^1$-valued image,
and Figure~\ref{fig:S2_img} the TV-regularized deconvolution of a synthetic $S^2$-valued image.
We observe  that the proposed reconstruction method is effective:
the blurred edges are sharpened 
and the signal-to-noise ratio is significantly improved.
We illustrate the effect of the proposed method on a real interferometric synthetic aperture radar (InSAR) image.
Synthetic aperture radar (SAR) is a radar technique for
sensing the earth's surface from measurements taken by aircrafts or satellites. 
InSAR images consist of the phase difference between two SAR images,
recording a region of interest either from two different angles of view or at two different points in time.
Important  applications of InSAR are the creation of accurate digital elevation models and the detection of terrain changes; cf.~\cite{massonnet1998radar,rocca1997overview}.
As InSAR data consists of phase values, their natural data space is the unit circle.
Figure~\ref{fig:sar} shows the effect of the proposed method on a real InSAR image taken from \cite{thiel1997ers}\footnote{Data available at \url{https://earth.esa.int/workshops/ers97/papers/thiel/index-2.html}.}. 
Next we consider $\Pos_3$-valued images.
Figure~\ref{fig:Pos3_img} shows the TV-regularized deconvolution of a synthetic image.
As in the spherical case, the edges are sharpened
and the signal-to-noise ratio is significantly improved.
A real data example is a diffusion tensor image of a human brain
provided by the Camino project \cite{cook2006camino}\footnote{Data available at \url{http://camino.cs.ucl.ac.uk/}}.
The original tensors were computed from the diffusion weighted images by a least squares fit
based on the Stejskal-Tanner equation, and invalid tensors were filled by averages of their neighboring pixels.
The image shows the corpus callosum which connects the right and the left hemisphere.
Figure~\ref{fig:camino} shows the effect of the proposed method
using a Gaussian kernel with $\sigma = 1.$

\section{Conclusion}\label{sec:Conclusion}

In this paper we have proposed and studied models for the variational (Tichanov-Phillips) regularization
with indirect measurement terms in the manifold setup.
In particular, the models apply to deconvolution/deblurring and to $\TV$ and $\TGV$ regularization for manifold-valued data in a multivariate setting. We have obtained results on the existence of minimizers of these models.
Further, we have provided the details for algorithmic realizations of the proposed variational
models. 
For differentiable data terms, we have further developed the concept of a generalized forward backward-scheme:
we have proposed a trajectory method together with a Gau\ss-Seidel type update scheme which improves the computational performance of the generalized forward backward-scheme
as confirmed in the experimental section.
We have also considered a variant based on a stochastic gradient descend part.    
For a non-differentiable data term, we have employed the well-established concept of a cyclic proximal point algorithm. 
For the implementation of these schemes we have 
derived rather explicit differential geometric representations of the (sub)gradients of the 
data terms which was a challenging task and a central contribution of this work.
In particular, we have computed explicit representations of the derivatives of the intrinsic mean mapping \eqref{eq:IntrMeanIntro}
w.r.t.\ the input points in the manifold.
Finally, we have provided a numerical study of the proposed schemes.
We have seen experiments with real and synthetic data. As data spaces we have considered the unit circle, the two-dimensional sphere as well as the space of positive matrices.

\section*{Acknowledgment}

Martin Storath was supported by the German Research Foundation DFG under Grant STO1126/2-1.
Andreas Weinmann was supported by the German Research Foundation DFG under Grants WE5886/4-1, WE5886/3-1.

\appendix

\section{Appendix}\label{sec:Appendix} 

We here supply the proofs for those statements of Section~\ref{sec:well-posedness} we have not shown right in place.

\begin{proof}[Proof of Lemma \ref{lem:TV2islsc}]
	In order to show that the $\TV_2$ regularizer is lower semicontinuous, we show that 
	the atom $d_{1,1}$ of \eqref{eq:DefTV2} is lower semicontinuous. The lower semicontinuity of  
	$d_{1,1}$ implies the lower semicontinuity since $d_{2}(x,y,z) = d_{1,1}(x,y,y,z)$
	which, having a look at the decomposition in \eqref{eq:DefTV2}, in turn, 
	implies the lower semicontinuity of $\TV_2.$
	
	To show the lower semicontinuity of $d_{1,1},$ we let $x_n \to x,$
	$y_n \to y,$ $v_n \to v,$ and $z_n \to z$ in $\mathcal M$ as $n \to \infty.$
	We have to show that 
	$d_{1,1}(x,y,v,z) \leq \liminf_{n \to \infty}  d_{1,1}(x_n,y_n,v_n,z_n). $
	To this end, we let $x_{n_k},y_{n_k},v_{n_k},z_{n_k}$ be corresponding  subsequences such that 
	$\lim_{k \to \infty} d_{1,1}(x_{n_k},y_{n_k},v_{n_k},z_{n_k})=$ $\liminf_{n \to \infty}  d_{1,1}(x_n,y_n,v_n,z_n). $
	We let $a_k$ and $b_k$ be corresponding midpoints of $x_{n_k},z_{n_k}$ and $y_{n_k},v_{n_k},$ respectively, such that 
	$d_{1,1}(x_{n_k},y_{n_k},v_{n_k},z_{n_k})$ $= \dist (\midp(x_{n_k},z_{n_k}),\midp(y_{n_k},v_{n_k})) =$
	$\dist (a_k,b_k).$ As convergent sequences, each of the sequences $x_{n_k},y_{n_k},v_{n_k},z_{n_k}$ is bounded. 
	By Lemma~\ref{lem:MeansInBoundedSet}, 
	the sequences $a_k,b_k$ are bounded, and therefore by the completeness of the manifold $\mathcal M,$ there is a common choice of subindices $l \mapsto k_l$
	such that $a_{k_l}$ and $b_{k_l}$ both converge.
	Let $a_{k_l} \to a$ and $b_{k_l} \to b$ as $k \to \infty.$ 
	Then $a$ is a midpoint of $x,y$ and $b$ is a midpoint of $v,z,$ and so 
	\begin{align*}
	d_{1,1}(x,y,v,z) &\leq \dist (a,b) \leq \lim_l \dist (a_{k_l},b_{k_l}) \\
	&= \lim_{k \to \infty} d_{1,1}(x_{n_k},y_{n_k},v_{n_k},z_{n_k})     =\liminf_{n \to \infty}  d_{1,1}(x_n,y_n,v_n,z_n). 
	\end{align*}
	This shows that $d_{1,1}$ is lower semicontinuous and completes the proof. 		
\end{proof}

\begin{proof}[Proof of Theorem \ref{thm:ExistenceCondR4SecondOrd}]	
	We proceed as in the proof of Theorem~\ref{thm:ExistenceCondR} and note that since 
	the lower semicontinuity of the data term \eqref{eq:DefDist4Vectors} is shown in Lemma~\ref{lem:DatTermLSC},
	and the	lower semicontinuity of the regularizing term $R$ is assumed, it is enough to show that the functional $F$ of 
	\eqref{eq:FunctionalFinExProof} is coercive.  
	Towards a contradiction suppose that $F$ is not coercive. Then there is $\sigma \in \mathcal M^K$ and a sequence $u^{(n)}$ in
	$\mathcal M^K,$ such that   
	$\dist(u^{(n)},\sigma) \to \infty$ as well as a subsequence of $u^{(n)}$ with bounded value of $F,$ i.e., 
	there is a subsequence $u^{(n_k)}$ of $u^{(n)},$ as well as a constant $C''>0$ such that 
	$F(u^{(n_k)}) \leq C''$ for all $k \in \mathbb N.$
	Hence, $R(u^{(n_k)}) \leq C''.$   
	By our assumption on $R,$ this implies that $\diam(u^{(n_k)})$ does not converge to $\infty$ or that 
	$\dist(u^{(n_k)}_{l_s},u^{(n_k)}_{r_s})$ is not bounded in $k \in \mathbb N$ for some $s \in \{0,S\}.$
	The latter situation cannot happen by our assumption on $\mathcal A$ for the following reason:
	if $\dist(u^{(n_k)}_{l_s},u^{(n_k)}_{r_s})$ where unbounded, 
	$\diam (\mathcal A u^{(n_k)})$ were unbounded or $R(u^{(n_k)})$ were unbounded;
	$R(u^{(n_k)})$ is bounded by our assumption on $u^{(n)},$
	and $\diam (\mathcal A u^{(n_k)})$ is bounded since 
	$\dist(\mathcal A(u^{(n_k)}),f)^p$ is bounded in $n$
	since $F(u^{(n)})<C''$ for all $n \in \mathbb N$ by our assumption.
	In consequence,  $\dist(u^{(n_k)}_{l_s},u^{(n_k)}_{r_s})$ is bounded 
	by our assumption on $\mathcal A.$
	This implies that $\diam(u^{(n_k)})$ does not converge to $\infty.$
	At this point we can now follow the argument of the proof of Theorem~\ref{thm:ExistenceCondR}
	starting at \eqref{eq:diamBounded4allK} literally to conclude the assertion. 
\end{proof}

\begin{proof}[Proof of Theorem~\ref{thm:ExistenceMinimizersSecondOrder}]
	We apply Theorem~\ref{thm:ExistenceCondR4SecondOrd}. 
	The lower semicontinuity of the $\TV_2$ term is shown in Lemma~\ref{lem:TV2islsc}. 	
	Towards the other condition of Theorem~\ref{thm:ExistenceCondR4SecondOrd}, 
	let $u^{(n)}$ be  a sequence such that  $\diam(u^{(n)}) \to \infty,$ 
	and such that we have $\dist(u^{(n)}_{x_0,y_0},u^{(n)}_{x_0+1,y_0}) \leq C' $ as well as   
	$\dist(u^{(n)}_{x_1,y_1},u^{(n)}_{x_1,y_1+1}) \leq C' $ for some $C'>0$ and all $n \in \mathbb N.$ 
	We have to show that  $\TV_2(u^{(n)})\to \infty.$	
	Towards a contradiction, assume that there is a subsequence $u^{(n_k)}$ of $u^{(n)}$ and $C''>0$ such that $\TV_2(u^{(n_k)}) \leq C''.$ 
	We show that, since $\TV_2(u^{(n_k)}) \leq C'',$ there is a 
	constant $C'''>0$ such that $\dist(u^{(n_k)}_{x+1,y},u^{(n_k)}_{x,y}) \leq C''' $ and 
	such that $\dist(u^{(n_k)}_{x,y+1},u^{(n_k)}_{x,y}) \leq C'''$	for all $x,y.$
	This can be seen by iterative application of the following fact: if $\TV_2(v^{(n)}) \leq C'',$
	then $d_2,d_{1,1}$ are bounded by $C''$ as well, and 
	$\dist(v^{(n)}_{x_1,y+1},v^{(n)}_{x_1,y}) 
	\leq \dist(v^{(n)}_{x_1,y},\midp(v^{(n)}_{x_1,y-1},v^{(n)}_{x_1,y+1})) + C'' 
	\leq \dist(v^{(n)}_{x_1,y},v^{(n)}_{x_1,y-1}) + 2 C''.
	$ 
	Iterative application shows the validity for the ``cross'' of indices $(x,y)$ with either $x=x_0$ or $y=y_0$  
	with the respective direction along the ``cross''. 
	Considering the index $(x_0,y_1)$ and using the triangle inequality implies   
	$\dist(v^{(n)}_{x_0,y_1},v^{(n)}_{x_0+1,y_1+1})$    
	$= 2 \dist(v^{(n)}_{x_0,y_1}, \midp( v^{(n)}_{x_0+1,y_1+1},v^{(n)}_{x_0,y_1}) ) $  
	$\leq 2 \left( \dist(v^{(n)}_{x_0,y_1},v^{(n)}_{x_0,y_1+1})  + 
	\dist(v^{(n)}_{x_0,y_1+1}, \midp( v^{(n)}_{x_0+1,y_1+1},v^{(n)}_{x_0,y_1}) )  \right).$
	The first term is bounded by the argument right above. Noticing the boundedness of $d_{1,1}$ by the constant $C''$ we get that
	$\dist(v^{(n)}_{x_0,y_1+1}, \midp( v^{(n)}_{x_0+1,y_1+1},v^{(n)}_{x_0,y_1}))$
	$\leq  \dist(v^{(n)}_{x_0,y_1+1}, \midp( v^{(n)}_{x_0,y_1+1},v^{(n)}_{x_0+1,y_1})) + C''.$
	But, we also have $\dist(v^{(n)}_{x_0,y_1+1}, \midp( v^{(n)}_{x_0,y_1+1},v^{(n)}_{x_0+1,y_1}))$
	$= 1/2  \dist( v^{(n)}_{x_0,y_1+1},v^{(n)}_{x_0+1,y_1}),$ and the latter is bounded by using the triangle inequality.
	Iterative application then shows 
	$\dist(v^{(n)}_{x+1,y},v^{(n)}_{x,y}) \leq C''' $ and 
	also $\dist(v^{(n}_{x,y+1},v^{(n)}_{x,y}) \leq C'''$	
	which by the triangle inequality implies the boundedness of $\diam(v^{(n)}).$
	Applying this to $u^{(n_k)}$ implies the boundedness of $\diam(u^{(n_k)})$ 
	which yields a contradiction as desired.
	Hence, $\TV_2(u^{(n)})\to \infty$
	and we apply Theorem~\ref{thm:ExistenceCondR4SecondOrd} to conclude the assertion.
\end{proof}

\begin{proof}[Proof of Theorem~\ref{thm:ExistenceMinimizersTGV}]
	We apply Theorem~\ref{thm:ExistenceCondR4SecondOrd}. 
	In \cite{bredies2017total}, we have shown the lower semicontinuity of \eqref{eq:DefTGVreg}.
	Towards the other condition of Theorem~\ref{thm:ExistenceCondR4SecondOrd}, 	
	let $u^{(n)}$ be  a sequence such that  $\diam(u^{(n)}) \to \infty,$ 
	and such that we have 
	$\dist(u^{(n)}_{x_0,y_0},u^{(n)}_{x_0+1,y_0}) \leq C', $  
	$\dist(u^{(n)}_{x_1,y_1},u^{(n)}_{x_1,y_1+1}) \leq C', $ as well as 
	$\dist(u^{(n)}_{x_2,y_2},u^{(n)}_{x_2+1,y_2}) \leq C',$ $|y_2-y_0|=1,$ 
	for some $C'>0$ and all $n \in \mathbb N.$ 
	We have to show that  $\TGV_\lambda(u^{(n)})\to \infty.$		
	Towards a contradiction, assume that there is a subsequence $u^{(n_k)}$ of $u^{(n)}$ and $C''>0$ such that $\TGV_\lambda(u^{(n_k)}) \leq C''.$ 
	In the following, we show that, since $\TGV_\lambda(u^{(n_k)}) \leq C'',$ there is a 
	constant $C'''>0$ such that $\dist(u^{(n_k)}_{x+1,y},u^{(n_k)}_{x,y}) \leq C''' $ and 
	such that $\dist(u^{(n_k)}_{x,y+1},u^{(n_k)}_{x,y}) \leq C'''$ for all $x,y.$
	To this end, we first observe that the boundedness of the expressions in the first and second line of \eqref{eq:DefTGVreg} 
	implies that either the distance between any two neighboring items in the $x_0$th line and in the $y_0$th column remains bounded as follows. 
	The boundedness of $\dist(u^{(n_k)}_{x_0,y_0},u^{(n_k)}_{x_0+1,y_0})$ implies using
	$\dist(u^{(n_k)}_{x_0,y_0},v^{(n_k),1}_{x_0,y_0}) \leq \dist(u^{(n_k)}_{x_0,y_0},u^{(n_k)}_{x_0+1,y_0})  + C'',$ 
	($C''$ due to the first line of \eqref{eq:DefTGVreg})
	the boundedness of $\dist(u^{(n_k)}_{x_0,y_0},v^{(n_k),1}_{x_0,y_0}).$
	Using $\TGV_\lambda(u^{(n_k)}) \leq C'',$ the second line of \eqref{eq:DefTGVreg} yields the boundedness of $\dist(u^{(n_k)}_{x_0+1,y_0},v^{(n_k),1}_{x_0+1,y_0}).$  
	Then the first line of \eqref{eq:DefTGVreg} yields 
	$\dist(u^{(n_k)}_{x_0+1,y_0},u^{(n_k)}_{x_0+2,y_0})$
	$\leq \dist(u^{(n_k)}_{x_0+1,y_0},v^{(n_k),1}_{x_0+1,y_0}) + C''$
	which implies the boundedness of $\dist(u^{(n_k)}_{x_0+1,y_0},u^{(n_k)}_{x_0+2,y_0}).$
	Iterative application yields the boundedness of $\dist(u^{(n_k)}_{x,y_0},u^{(n_k)}_{x,y_0})$
	for any $x.$ The analogous statement follows for $x_1,y_1$
	and we get the validity for the ``cross'' of indices $(x,y)$ with either $x=x_0$ or $y=y_0$  
	with the respective directions along the ``cross''. 
	Now we invoke the additional assumption \eqref{eq:4fromCrossToAll}.
	By the argumentation above, we get the boundedness of $\dist(u^{(n_k)}_{x,y_2},u^{(n_k)}_{x,y_2})$ 
	for any $x.$ Then we may apply the triangle inequality to conclude the boundedness
	of $\dist(u^{(n_k)}_{x_0+1,y_0},u^{(n_k)}_{x_0+1,y_2})$ by that of the sum of 
	$\dist(u^{(n_k)}_{x_0+1,y_0},u^{(n_k)}_{x_0,y_0}),$  and 
	$\dist(u^{(n_k)}_{x_0,y_0},u^{(n_k)}_{x_0,y_2}),$ and 
	$\dist(u^{(n_k)}_{x_0+1,y_2},u^{(n_k)}_{x_0,y_2}).$
	Iterated application then yields the boundedness of
	$\dist(u^{(n_k)}_{x,y_0},u^{(n_k)}_{x,y_2})$ for all $x.$
	Then, for each fixed $x$ the boundedness of the $\TGV$ term, in particular the second line of \eqref{eq:DefTGVreg},
	(with the same argument as above for $x_0$ above) yields the boundedness of 
	$\dist(u^{(n_k)}_{x,y},u^{(n_k)}_{x,y+1})$ for all $x,y.$
	Again, applying the triangle inequality, also 
	$\dist(u^{(n_k)}_{x,y},u^{(n_k)}_{x+1,y})$ remains bounded for all $x,y.$
	This implies that $\diam(u^{(n_k)})$ is bounded which contradicts our assumption.
	Hence, $\TGV_\lambda(u^{(n)})\to \infty$
	and we may apply Theorem~\ref{thm:ExistenceCondR4SecondOrd} to conclude the assertion.
\end{proof}

{\small
	\bibliographystyle{plainnat}
	\bibliography{ManiCon}
}

\end{document}